\newcommand{\R}{\mathbb{R}}
\newcommand{\N}{{\mathbb N}}
\newcommand{\Z}{{\mathbb Z}}
\newcommand{\T}{\mathbb{T}}
\newcommand{\ov}{\overline}
\newcommand{\spa}{\operatorname{span}}
\numberwithin{equation}{section}
\newtheorem{theorem}{Theorem}[section]
\newtheorem{corollary}[theorem]{Corollary}
\newtheorem{lemma}[theorem]{Lemma}
\newtheorem{proposition}[theorem]{Proposition}
\theoremstyle{definition}
\newtheorem{definition}{Definition}[section]
\newtheorem{remark}{Remark}[section]
\newcommand{\real}{\mathbb{R}}
\newcommand{\nat}{{\mathbb N}}
\newcommand{\inte}{{\mathbb Z}}
\newtheorem{problem}{Problem}
\newtheorem{factD}[theorem]{Fact}
\newtheorem{conjecture}{Conjecture}
\newcommand{\thm}[2]{\begin{theorem}\label{#1}#2\end{theorem}}
\newcommand{\cor}[2]{\begin{corollary}\label{#1}#2\end{corollary}}
\newcommand{\lem}[2]{\begin{lemma}\label{#1}#2\end{lemma}}
\newcommand{\defi}[2]{\begin{definition}\label{#1}#2\end{definition}}
\begin{document}

\title{Measurable  selector in Kadison's carpenter's theorem}

\author{Marcin Bownik}
\address{Department of Mathematics, University of Oregon, Eugene, OR 97403--1222, USA}
\email{mbownik@uoregon.edu}

\author{Marcin Szyszkowski}
\address{International School of Gda\'nsk,
ul. Sucha 29, 80-531 Gda\'nsk, Poland}
\email{fox@mat.ug.edu.pl}

\date{\today}

\keywords{the Schur-Horn problem, diagonals of self-adjoint operators, Carpenter's theorem}

\subjclass[2000]{Primary: 42C15, 47B15, Secondary: 46C05}

\thanks{The first author was supported in part by the NSF grant DMS-1665056.}

\begin{abstract} 
We show the existence of a measurable selector in Carpenter's Theorem due to Kadison \cite{k1, k2}. This solves a problem posed by Jasper and the first author in \cite{bj}. As an application we obtain a characterization of all possible spectral functions of shift-invariant subspaces of $L^2(\R^d)$ and Carpenter's Theorem for type I$_\infty$ von Neumann algebras.
\end{abstract}

\maketitle 

\section{Introduction}

Kadison \cite{k1, k2} gave a complete characterization of the diagonals of orthogonal projections on a separable infinite dimensional Hilbert space $H$. 

\begin{theorem}[Kadison]\label{Kadison} Let $(d_{i})_{i\in \N}$ be a sequence in $[0,1]$. Define
\[a=\sum_{d_{i} \le 1/2}d_{i} \quad\text{and}\quad b=\sum_{d_{i}> 1/2}(1-d_{i}).\]
There exists a projection $P$ with diagonal $(d_{i})_{i\in \N}$ if and only if one of the following holds:
\begin{itemize}
\item $a,b<\infty$ and  $a-b\in\Z$,
\item $a=\infty$ or $b=\infty$.
\end{itemize}
\end{theorem}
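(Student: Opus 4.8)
My plan is to prove the two implications separately: necessity of the numerical conditions is a short trace computation, while their sufficiency requires an explicit construction of the projection.

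\emph{Necessity.} Suppose $P$ is a projection with diagonal $(d_i)$ and that $a,b<\infty$; the goal is to deduce $a-b\in\Z$. Put $J=\{i:d_i>1/2\}$ and let $Q$ be the diagonal projection onto $\overline{\operatorname{span}}\{e_i:i\in J\}$. Expanding $(P-Q)^2=P+Q-PQ-QP$ and using $P^2=P$, a one-line computation of the diagonal entries gives $\langle (P-Q)^2e_i,e_i\rangle=1-d_i$ for $i\in J$ and $=d_i$ for $i\notin J$, so that $\operatorname{tr}\big((P-Q)^2\big)=a+b<\infty$; hence $P-Q$ is Hilbert--Schmidt, in particular compact. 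I then invoke the classical description of a pair of projections (Halmos): up to unitary equivalence, away from a part where $P-Q=0$ the operator $P-Q$ restricts to $+1$ on $PH\cap QH^{\perp}$, to $-1$ on $P^{\perp}H\cap QH$ (both finite-dimensional, by compactness), and otherwise to a direct sum of $2\times 2$ blocks with eigenvalues $\pm\lambda$. Consequently $(P-Q)^{3}$ is trace class and $\operatorname{tr}\big((P-Q)^{3}\big)=\dim(PH\cap QH^{\perp})-\dim(P^{\perp}H\cap QH)\in\Z$. On the other hand one checks $(P-Q)^{3}=(P-Q)-(PQP-QPQ)$, and summing the diagonal entries of the right-hand side gives $a-b$: the contributions of $PQP$ and $QPQ$ are sums of the form $\sum_j|P_{ij}|^2$ over complementary index sets, which cancel after a relabelling once $P_{ij}=\overline{P_{ji}}$ is used. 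Therefore $a-b\in\Z$.

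\emph{Sufficiency: reductions.} Conversely, assume the stated conditions; I construct a projection $P$ with diagonal $(d_i)$. Coordinates with $d_i\in\{0,1\}$ contribute $1\times1$ blocks and may be discarded, so I assume $d_i\in(0,1)$ for every $i$. By the symmetry $P\mapsto I-P$, $(d_i)\mapsto(1-d_i)$, which preserves all the hypotheses and in particular forces the quantity ``$a$'' to become infinite whenever ``$b$'' was, in the case where one of $a,b$ is infinite I may and do assume $a=\infty$, i.e.\ there are infinitely many $i$ with $d_i\le1/2$ and their sum diverges. Throughout I use the finite Carpenter's theorem: a finite tuple in $[0,1]$ whose entries sum to an integer $r$ is the diagonal of a rank-$r$ projection on a finite-dimensional space; this is the realizability half of the Schur--Horn theorem (the decreasing rearrangement of such a tuple is majorized by $(1,\dots,1,0,\dots,0)$ with $r$ ones) and can be produced concretely by composing $2\times2$ Givens rotations.

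\emph{Sufficiency: the construction, and the main obstacle.} The projection is then built as an infinite matrix by a ``staircase'' procedure: one produces an orthonormal system $(f_k)$ spanning the intended range $PH$, with each $f_k$ finitely supported and consecutive supports overlapping in a controlled way, such that $\sum_k|\langle f_k,e_i\rangle|^2=d_i$ for every $i$. The $f_k$ are chosen inductively: having realized the diagonal exactly on an initial segment of coordinates, one carries forward a single partially-filled coordinate holding a residual value in $(0,1)$, and the inductive step is a finite rotation supplied by the finite case above, which realizes the next prescribed values while emitting a new residual. How large to make each block is dictated by the hypothesis. When $a,b<\infty$ with $a-b=k$, one steers the residuals so that the construction closes up, parking $|k|$ extra units of range (if $k>0$) or of orthocomplement (if $k<0$) ``at infinity'' --- this is exactly the index obstruction isolated in the necessity part. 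When $a=\infty$, one instead exploits the divergence of $\sum_{d_i\le1/2}d_i$ to absorb all residuals together with the finite total deficit of the ``large'' entries. Passing to the limit and verifying $P=P^{*}=P^{2}$ and $\langle Pe_i,e_i\rangle=d_i$ completes the proof. I expect essentially all of the difficulty to sit in this last step: designing the staircase so that the residuals stay in a fixed compact subinterval of $(0,1)$ and converge as required, proving that the resulting infinite matrix is a bounded idempotent operator, and, in the case $a,b<\infty$, discharging the integrality of $a-b$ correctly at infinity. The two regimes genuinely demand different bookkeeping, the finite one being the more delicate.
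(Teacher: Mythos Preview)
The paper does not prove Theorem~\ref{Kadison}; it is quoted as Kadison's result \cite{k1,k2} and used as background. What the paper does prove is the measurable selector version, Theorem~\ref{sane}, and in doing so it gives (in Sections~\ref{S2}--\ref{S3}) an explicit constructive proof of the sufficiency direction that one can strip of measurability to recover Carpenter's theorem. So there is no ``paper's own proof'' of the full statement to compare against, only an implicit proof of one direction.

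Your necessity argument is correct and is genuinely different from Kadison's original combinatorial proof: you are using the essential-codimension/index approach via the Hilbert--Schmidt difference $P-Q$ and the identity $(P-Q)^3=(P-Q)-(PQP-QPQ)$, which traces back to the BDF circle of ideas and appears, for instance, in \cite{kl,ar}. It is cleaner conceptually and explains \emph{why} an integer appears.

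Your sufficiency, however, is only a sketch, and the gap is exactly where you locate it. For the nonsummable case your staircase idea is essentially the construction the paper carries out in Theorem~\ref{mainspec} (from \cite{bj}); that part can be made to work as you describe. The summable case is another matter: ``steer the residuals so that the construction closes up'' is not yet an argument, and a single-residual staircase does not obviously terminate when $a,b<\infty$. The paper's mechanism (Theorem~\ref{mainfin2}) is quite different from what you outline: it singles out three entries $a_{i_1},a_{i_2},b_{i_3}$, replaces them by modified values $\tilde a_{i_1},\tilde a_{i_2},\tilde b_{i_3}$ so that the remaining sequence decouples into one finite block with integer sum and two infinite blocks each summing to $1$ (handled by Theorem~\ref{skonczone} and Lemma~\ref{sum1}), and then uses a $3\times 3$ Schur--Horn step (Theorem~\ref{obce}) together with the majorization $(b_{i_3},a_{i_1},a_{i_2})\preceq(\tilde b_{i_3},\tilde a_{i_1},\tilde a_{i_2})$ to restore the original three entries. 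That decoupling-plus-repair idea is the missing ingredient in your summable case; without something of that kind, your sketch does not close.
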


Kadison \cite{k1,k2} referred to the necessity part of Theorem \ref{Kadison} as the Pythagorean Theorem and the sufficiency as Carpenter's Theorem. It has been studied by a number of authors \cite{ar, amrs, mbjj, bj, kl}. Kadison's Theorem can be generalized to the setting of von Neumann algebras. A general version of the Schur-Horn problem asks for a characterization of possible diagonals of an operator based on its spectral data. In the setting of von Neumann algebras the notion of a diagonal of an operator is replaced by the conditional expectation onto a maximal abelian self-adjoint subalgebra (MASA).  The general Schur-Horn problem can then be formulated in the following way, see \cite{ks}.

\begin{problem}\label{psh}
Let $T$ be an operator in a von Neumann algebra $\mathfrak M$, an let $\mathcal A$ be a MASA in $\mathfrak M$ with corresponding conditional expectation $E_{\mathcal  A} : \mathfrak M \to \mathcal A$. Characterize the elements of the set
\begin{equation}\label{di0}
D_{\mathcal A}(T):=\{E_{\mathcal A}(U^*TU) : U \text{ is a unitary in }\mathfrak M\}.
\end{equation}
\end{problem}

This research area has been initiated by Arveson and Kadison \cite{ak, k2} who have asked for a characterization of $D_{\mathcal A}(T)$ when $T$ is a  projection, or more generally a self-adjoint operator, in a von Neumann factor of type II$_1$. Problem \ref{psh} was investigated by a number of authors \cite{am1, am2, am3, bhra, dfhs} and settled by Ravichandran \cite{mr, rav}. The same problem when $T$ is a normal operator was studied by Arveson \cite{a}, and by Kennedy and Skoufranis \cite{ks}, who have shown approximate Carpenter's Theorem for the closure of $D_{\mathcal A}(T)$ in von Neumann factors of type I${}_\infty$, II, and III. 
In the setting of von Neumann factor I$_\infty$, when $\mathfrak M=\mathcal B(H)$, Problem \ref{psh} was studied by Kaftal, Loreaux, and Weiss \cite{kw, lw} when $T$ is a positive compact operator and by Jasper and the first author when $T$ has finite spectrum \cite{bj2, jas}.

The goal of this paper is to show 
the existence of a measurable selector in Kadison's Carpenter's Theorem. This problem was explicitly posed by Jasper and the first author in \cite{bj}, although it was motivated by the earlier work of Rzeszotnik and the first author on spectral functions of shift-invariant spaces \cite{br, br2}.

\begin{definition}\label{mp}
Let $X$ be a measurable space and $H$ be a separable Hilbert space.
We say that $P: X \to \mathcal B( H)$ is a {\it measurable projection}, if:
\begin{enumerate}[(i)]
\item $P(x)$ is an orthogonal projection for all $x\in X$, and 
\item $P$ is weakly operator measurable, i.e., 
\[
x \mapsto \langle P(x) u,v \rangle \quad\text{is measurable for all }u,v\in H.
\]
Equivalently, all entries of $P(x)$ with respect to a fixed orthonormal basis $(e_i)_{i\in\N}$ of $H$ are  measurable.
\end{enumerate}
The {\it diagonal} of $P$ is a sequence of measurable functions $(\langle P(x)e_i, e_i \rangle)_{i\in \N}$.
\end{definition}

Our main result takes the following form.

\begin{theorem}\label{sane}
Let $X$ be a measurable space.
Let $f_i:X\to [0,1]$, $i\in\N$, be measurable functions. Define functions $a,b: X \to [0,\infty]$ by 
\begin{equation}\label{sane0}
a(x)=\sum_{i\in\N: f_i(x)\leq {1\over 2}} f_i(x) ~\quad\text{and}~\quad b(x)=\sum_{i\in\N: f_i(x)>{1\over 2}}1-f_i(x) \qquad\text{for } x\in X.
\end{equation}
Assume that for every $x\in X$, we have either:
\begin{enumerate}[(i)]
\item
$a(x)=\infty$ or $b(x)=\infty$, or
\item
$a(x), b(x)<\infty$ and $a(x)-b(x)\in\Z$.
\end{enumerate}
Then, there exists a measurable projection $P: X \to \mathcal B( H)$
with diagonal $(f_i)_{i\in\N}$.
\end{theorem}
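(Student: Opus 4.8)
My plan is to produce a single Borel rule that converts the numerical data into a projection, and then to apply it pointwise. Let $\mathcal{D}\subseteq[0,1]^{\N}$ be the set of all sequences $(d_i)$ satisfying one of the two alternatives in Theorem~\ref{Kadison}; since $a$ and $b$ are Borel functions of $(d_i)$ (suprema of partial sums), $\mathcal{D}$ is a Borel subset of the Polish space $[0,1]^{\N}$. It suffices to construct one Borel map $\Phi\colon\mathcal{D}\to\mathcal{B}(H)$ whose values are orthogonal projections and which satisfies $\langle\Phi((d_i))e_j,e_j\rangle=d_j$ for all $j$: given an arbitrary measurable space $X$ and functions $f_i$ as in the statement, $x\mapsto(f_i(x))_i$ is measurable from $X$ into $\mathcal{D}$, so $P(x):=\Phi((f_i(x))_i)$ is then a measurable projection with diagonal $(f_i)_{i\in\N}$. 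Thus the whole problem reduces to the ``universal'' instance $X=\mathcal{D}$, and the real task is to run a proof of Carpenter's Theorem in which every step depends Borel-measurably on $(d_i)$.

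The first ingredient is a measurable finite-dimensional Carpenter (Horn) theorem: if $c_1,\dots,c_n\colon Y\to[0,1]$ are measurable and $\sum_{j}c_j(y)=k\in\Z$ for every $y\in Y$, then there is a measurable map from $Y$ into the rank-$k$ projections of $\mathbb{C}^n$ with diagonal $(c_j)_{j=1}^n$. This follows from the classical induction on $n$: after decoupling the coordinates whose entry already lies in $\{0,1\}$, one picks the two least indices $p<q$ with $c_p,c_q\in(0,1)$ and applies the rotation in the $(p,q)$-plane that drives one of the two entries to $0$ (when $c_p+c_q<1$) or to $1$ (when $c_p+c_q\ge 1$), reducing to dimension $n-1$. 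The choice of $p,q$ is canonical, the rotation angle is a continuous function of $c_p,c_q$, and the dichotomy $c_p+c_q<1$ versus $c_p+c_q\ge 1$ is a Borel condition, so one splits $Y$ into finitely many Borel pieces and recurses; the resulting projection is Borel in the data.

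For the infinite problem one first partitions $\mathcal{D}$ measurably by the relevant discrete invariants: which of the alternatives (i), (ii) holds (the sets $\{a=\infty\}$ and $\{b=\infty\}$ are measurable), the value of the integer $a-b$ when $a,b<\infty$, and the Borel location of the indices with $d_i\in\{0,1\}$ (these are forced into the kernel or range). Apart from the case where only finitely many $d_i$ lie in $(0,1)$ — which again reduces to the finite Horn lemma together with trivial $0/1$ summands — one cannot in general cut the diagonal into finite blocks of integer sum, so one must realize a genuinely infinite-rank-or-corank piece carrying a fractional discrepancy. I would isolate Borel ``infinite Carpenter'' lemmas — one adapted to $a=\infty$ (where there is plenty of slack), the symmetric one to $b=\infty$, and one to $a,b<\infty$ with $a-b=N$ — realizing such a sequence $(c_i)$ as the diagonal of an explicitly built projection obtained from positive square roots and a controlled, strongly convergent chain of planar rotations (and, in the $a,b<\infty$ case, a single rank-one ``absorbing'' vector that soaks up the fractional mismatch between $a$ and $b$), and then checking that the strong limit exists, is a projection, has the prescribed diagonal, and has matrix entries that are Borel in $(c_i)$.

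Finally, the pieces produced on the various Borel parts of $\mathcal{D}$ — the finite integer-sum blocks, the infinite block(s), and the one-dimensional $0/1$ summands — are assembled into a single operator on one copy of $H$ by choosing, Borel-measurably in $(d_i)$, an orthonormal identification of $H$ with the orthogonal direct sum of the block spaces; because the combinatorial decomposition varies with the data, this identification must be given by formulas in $(d_i)$, which accounts for most of the bookkeeping. I expect the main obstacle to be exactly this tension between measurability and data-dependent combinatorics: the partition of $\N$ into trivial, small, and large indices and then into blocks, the re-indexing (in particular, handling ties should any rearrangement be used), and the convergence estimates for the infinite rotation construction must all be uniform and explicit enough that each entry $x\mapsto\langle P(x)e_i,e_j\rangle$ comes out measurable. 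A softer alternative would be to note that the multifunction $x\mapsto\{P\in\mathcal{B}(H):P\text{ is a projection and }\langle Pe_i,e_i\rangle=f_i(x)\text{ for all }i\}$ has nonempty closed values in the Polish space of projections with the strong operator topology and to invoke the Kuratowski--Ryll-Nardzewski selection theorem; but verifying Borel measurability of this multifunction over a general measurable base seems to need essentially the same constructive input, so I would carry out the explicit construction.
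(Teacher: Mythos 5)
Your opening reduction is correct and worth keeping: the set $\mathcal{D}\subseteq[0,1]^{\N}$ of admissible diagonals is Borel, and a single Borel map $\Phi:\mathcal{D}\to\mathcal{B}(H)$ with the prescribed diagonals would settle the theorem for every measurable $X$ by composition with $x\mapsto(f_i(x))_i$. Your measurable finite-dimensional Carpenter lemma via canonical planar rotations is also essentially sound (the paper instead quotes the measurable Schur--Horn theorem of Benac--Massey--Stojanoff), and your case division by $\{a=\infty\}$, $\{b=\infty\}$, the integer $a-b$, and the location of the $0/1$ entries matches the paper's. The gap is that the two ``infinite Carpenter'' lemmas you isolate are exactly where the content of the theorem lives, and you assert them rather than prove them: ``an explicitly built projection obtained from positive square roots and a controlled, strongly convergent chain of planar rotations'' names a hoped-for object, not a construction, and the verifications you defer (existence of the strong limit, idempotence, the diagonal identity, Borel dependence of every matrix entry) are the entire proof.

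Concretely, in the nonsummable case ``plenty of slack'' does not address how the entries in $(1/2,1]$ are to be interleaved with the small ones; the paper needs a measurable reordering of each block between consecutive partial-sum crossings, and, when there are infinitely many large entries, a further splitting of $\N$ into the subsequences $\{2^{k-1}m: k\in\N\}$ so that each subsequence carries exactly one large entry and a still-divergent sum of small ones, after which an explicit orthonormal system realizes the diagonal. In the summable case your sketch is not just incomplete but, as literally described, insufficient: one rank-one ``absorbing'' vector cannot soak up the mismatch, because both the tail of the small entries and the tail of the co-entries $1-b_i$ must be absorbed into unit-sum blocks separately, and doing so necessarily perturbs finitely many diagonal entries. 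The paper modifies three entries $a_{i_1},a_{i_2},b_{i_3}$ to create one finite block of integer sum and two infinite blocks of sum exactly one, and then restores the original three entries by a $3\times 3$ majorization step inside the already-built projection. Until these constructions (or genuine substitutes) are carried out with the convergence and measurability checks, the proposal remains a plan rather than a proof.
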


Theorem \ref{sane} also yields a characterization of spectral functions of shift-invariant subspaces of $L^2(\R^n)$, which were introduced and studied by Rzeszotnik and the first author \cite{br, br2}. In fact, this was the main motivation for studying this problem. In addition, Theorem \ref{sane} solves Problem \ref{psh} for von Neumann algebras of type I$_\infty$ when $T$ is a projection. 

The proof of Theorem \ref{sane} splits into two natural cases: the nonsummable case (i) and summable case (ii), which are shown in Sections \ref{S2} and \ref{S3}, respectively. The nonsummable case of Theorem \ref{sane} is based on an algorithmic technique for finding a projection with all diagonal entries in $[0,1/2]$ except possibly one term. This technique was introduced by Jasper and the first author in \cite[Section 4]{bj}. It is related to the spectral tetris construction of tight frames introduced by Casazza, Fickus, Mixon, Wang, and Zhou in \cite{cfmwz}. The proof of the case (i) also relies heavily on techniques of measurable permutations.

The proof of the summable case (ii) employs a measurable variant of the finite dimensional Schur-Horn theorem which was shown by Benac, Massey, and Stojanoff in \cite{bms}. The key role is played by a decoupling procedure that splits a desired diagonal sequence into three parts modifying at most one entry in each group. The resulting sequences correspond either to infinite dimensional rank one projections or finite dimensional projections. Then, the measurable variant of the Schur-Horn theorem enables us to recover a projection with the original diagonal.

Throughout the paper $X$ denotes a measurable space. For a function $f:X\to\real$ and $a\in\real$, the set $\{f=a\}$ stands for $\{x\in X:f(x)=a\}=f^{-1}(a)$; similarly, we define $\{f<a\}, \{f\leq a\}$, etc.

\section{Nonsummable case} \label{S2}

In this section we prove the nonsummable case of Theorem \ref{sane}. 

\thm{main}{Let $f_i:X\to [0,1]$, $i\in \N$, be measurable functions. Define functions $a,b:X \to [0,\infty]$ by \eqref{sane0}. If for every $x\in X$, $a(x)=\infty$ or $b(x)=\infty$, then there is a measurable projection $P: X \to \mathcal B( H)$ with diagonal $(f_i)_{i\in \N}$.
}

We will repeatedly use the following two elementary lemmas.
\lem{sklejanie}{Let $X_i\subset X$, $i\in \N$, be measurable, $Z$ be a topological space, and $g_i:X_i\to Z$ be measurable functions.
If $X_i$, $i\in\N$, are pairwise disjoint, then the function $g=\bigcup g_i: \bigcup X_i \to Z$ is measurable.}

\lem{index}{Let $f_i: X \to \R$, $i\in \N$, and $h:X\to\nat$ be measurable. Define a function 
\[
f_h: X \to \R, \qquad f_h(x)=f_{h(x)}(x), \ x\in X.
\]
Then, $f_h$ is measurable.}

\begin{proof} For any $k\in\N$, the set $\{h=k\}$ is measurable and on this set the function $f_h=f_k$ is measurable. \end{proof}

We first prove Theorem \ref{main} in a very special case given by Theorem \ref{mainspec}. This is precisely the setting of the algorithm in \cite[Section 4]{bj}. To do this we need to introduce some auxiliary functions. 

\begin{definition}\label{in}
Let $f_i:X\to [0,1]$, $i\in \N$, be measurable functions and $N\in \N \cup\{\infty\}$ be such that 
\begin{equation}\label{in0}
\sum_i f_i(x) = N \qquad\text{for all }x\in X.
\end{equation}
For any $n<N$, $n\in \N$, we define
$minS(n):X\to\nat$ by 
\[
minS(n)(x)=\min\{i:S_i(x)\geq n\},\qquad\text{where }
S_i=\sum_{j=1}^i f_j.
\]
To emphasize the dependence on $(f_i)_{i\in \N}$, we shall use the notation $minS((f_i),n)$.
\end{definition}

The assumption \eqref{in} guarantees that a function $minS(n)$ is defined on the whole $X$ when $n<N$. In the next three results we will assume that the sequence of functions $(f_i)_{i\in \N}$ is as in Definition \ref{in}.

\lem{minS}{For any $n<N$, a function $minS(n)$ is measurable. }

\begin{proof} For all $k\in \N$ a function $S_k$ is measurable. Fix $n$ and $k$. The set
\[
\{minS(n)=k\}=\{S_k\geq n\}\cap \{S_{k-1}<n\}
\]
 is measurable. Function $minS(n)$ is constant
(equal to $k$) on $\{minS(n)=k\}$. By Lemma \ref{sklejanie} we are done. 
\end{proof}

As a corollary of Lemmas  \ref{index} and \ref{minS} we have
\cor{fminS(n)}{For any $n<N$, functions  
\[
f_{minS(n)} \qquad\text{and}\qquad \sum_{i=1}^{minS(n)} f_i~=~S_{minS(n)}
\]
are measurable.}

\thm{mainspec}{Let $(f_i)_{i\in \N}$ be a sequence of measurable functions and let $N \in \N \cup \{\infty\}$. Suppose that for all $x\in X$,
\begin{enumerate}[(i)]
\item $\sum_i f_i(x)=N$,
\item
$f_i(x)\in [0,{1\over 2}]$ for $i>1$, and 
\item
$f_{minS(n)-1}(x) \geq f_{minS(n)}(x)$ for all $1\le n<N$.
\end{enumerate}
Then, there is a measurable projection $P: X \to \mathcal B( H)$ such that $P(x)$ has diagonal $(f_i(x))_{i\in\N}$ for all $x\in X$. }

\begin{proof} The proof is a repetition of the algorithm in \cite [Theorem 4.3]{bj}. Consequently, we only need to verify
that this construction yields a measurable projection as in Definition \ref{mp}.

For any $n<N$, define a function $\sigma_n: X \to [0,1]$ by
 $$\sigma_n(x)=n - S_{minS(n)-2}(x) =n-\sum_{i=1}^{minS(n)(x)-2}f_i(x).$$
Function $\sigma_n$ is measurable as a consequence of Corollary \ref{fminS(n)}.
 Note that 
 \[
 \sigma_n\leq f_{minS(n)-1}+f_{minS(n)}.
 \]
By the minimality of $minS(n)$ we have
\[
\sigma_n = n- S_{minS(n)-1}+f_{minS(n)-1}>f_{minS(n)-1}.
\]
Hence, by (iii) we have
\[
2\sigma_n>f_{minS(n)-1}+f_{minS(n)}.
\]
We will use the the following elementary lemma from \cite{bj}.

 \lem{numbera}{{\cite[Lemma 4.1]{bj}} Let $d_1,~d_2,~\sigma\in [0,1]$, $\max \{d_1,d_2\}\leq\sigma\leq d_1+d_2$ and $2\sigma >d_1+d_2$.
 The number $$ a={{\sigma (\sigma -d_2)}\over{2\sigma-d_1-d_2}}$$
 satisfies:
 $a,~\sigma -a,~d_1-a,~d_2-\sigma +a \in [0,1]$ and \newline
\begin{equation}\label{numbera1}
a(d_1-a)=(\sigma-a)(d_2-\sigma +a).
\end{equation} }
 Substituting values of $f_{minS(n)-1}=d_1$ and $f_{minS(n)}=d_2$ and $\sigma_n$ as $\sigma$, Corollary \ref{fminS(n)} yields
the measurability of a function $a_n: X\to [0,1]$ given by
 $$a_n={{\sigma_n(\sigma_n-f_{minS(n)})}\over{2\sigma_n-f_{minS(n)-1}-f_{minS(n)}}}.$$
In particular, functions $
\sigma_n-a_n$, $f_{minS(n)-1}-a_n$, $f_{minS(n)}-\sigma_n+a_n$ are measurable, and so are functions $\sqrt{\sigma_n-a_n}$, $\sqrt{f_{minS(n)-1}-a_n}$, $\sqrt{f_{minS(n)}-\sigma_n+a_n}: X \to [0,1]$.

\smallskip
Let $(e_i)_{i\in \N}$ be an orthonormal basis of the space $H$. We define a sequence of vectors $(v_n)_{n<N}$ by
\[
\begin{aligned}
v_1 &=\sum_{i=1}^{minS(1)-2}\sqrt{f_i} e_i + \sqrt{a_1} e_{minS(1)-1}
-\sqrt{\sigma_1-a_1} e_{minS(1)},
\\
v_n &=\sqrt{f_{minS(n-1)-1}-a_{n-1}} e_{minS(n-1)-1} + \sqrt{f_{minS(n-1)}-\sigma_{n-1}+a_{n-1}} e_{minS(n-1)}
\\
 &\quad +\sum_{i=minS(n-1)+1}^{minS(n)-2}\sqrt{f_i} e_i +\sqrt{a_n} e_{minS(n)-1}-\sqrt{\sigma_n-a_n} e_{minS(n)}, \qquad 1<n<N.
\end{aligned}
\]
Vectors $v_n$ are well-defined since we have $minS(n-1) +2 \le minS(n)$ due to the assumption (ii).
In the case when $N<\infty$, we also define the ultimate vector
\[
\begin{aligned}
v_N =\sqrt{f_{minS(N-1)-1}-a_{N-1}} e_{minS(N-1)-1} & + \sqrt{f_{minS(N-1)}-\sigma_{N-1}+a_{N-1}} e_{minS(N-1)}
\\
&+ \sum_{i=minS(N-1)+1}^{\infty}\sqrt{f_i} e_i.
\end{aligned}
\]

A direct calculation using \eqref{numbera1} shows that $(v_n)_{n\in \N}$ forms an orthonormal set of vectors in $H$.
We refer the reader to \cite[Theorem 4.3]{bj} for the proof in the case $N=\infty$. The same is true for $(v_n)_{n=1}^N$ in the case $N<\infty$. Indeed, this follows by the fact that the support of the ultimate vector $v_N$ overlaps only with the preceding vector $v_{N-1}$.

We can represent vectors $(v_n)_{n=1}^N$ as rows of an $N\times \infty$ matrix $V$\\
\[
\begin{bmatrix}
\sqrt{f_1}&\ldots&\sqrt{a_1}& -\sqrt{\sigma_1-a_1}&  &  \\
 &  & \sqrt{f_\triangle -a_1} &\sqrt{f_\spadesuit - \sigma_1+a_1}&\sqrt{f_\heartsuit} & \ldots\hspace{-2pt}&\sqrt{a_2}& -\sqrt{\sigma_2-a_2} & & \\
&&&&&& \sqrt{f_\clubsuit -a_2}& \sqrt{f_\diamondsuit -\sigma_2+a_2} &\sqrt{f_\nabla} & \ldots 
\\
&&&&&&&&& \ldots
 \end{bmatrix} 
 \]
where $f_\triangle=f_{minS(1)-1},~f_\spadesuit =f_{minS(1)},~f_\heartsuit =f_{minS(1)+1},~f_\clubsuit =f_{minS(2)-1},
~f_\diamondsuit =f_{minS(2)},~f_\nabla =f_{minS(2)+1}$, and the empty spaces are zeros.
That is, row $n$ of the matrix $V$ represents coefficients of the vector $v_n$ with respect to the basis $(e_i)_{i\in \N}$,
\[
v_n=\sum_{k\in\N} V_{n,k} e_k.
\]

{\bf Claim.} {For any $n$ and $k$, a function $V_{n,k}:X\to [0,1]$ is measurable.}
\begin{proof}
Observe that
$$ V_{n,k}= 
\begin{cases}
0 & k<minS(n-1)-1, \\
\sqrt{f_{minS(n-1)-1}-a_{n-1}} & k=minS(n-1)-1, \\
\sqrt{f_{minS(n-1)}-\sigma_{n-1}+a_{n-1}} & k=minS(n-1) ,\\
\sqrt{f_k} & minS(n-1)<k<minS(n)-1, \\
\sqrt{a_n} & k=minS(n)-1, \\
\sqrt{\sigma_n-a_n} & k=minS(n), \\
0 & k> minS(n). \end{cases}
$$
In the above we set $minS(0)=a_0=\sigma_0=0$. If $N<\infty$, then we also set $minS(N)=\infty$, consequently, the last three cases are vacuous.
The sets defined by the above cases, such as $\{k=minS(n-1)\}$,
are measurable. Therefore, the measurability of $minS(i), f_i, a_i,\sigma_i$ together with Lemmas \ref{sklejanie} and \ref{index} yields the claim.
\end{proof}

The sought measurable projection $P$ is given by the formula 
$$Pv=\sum_{n=1}^N \langle v,v_n \rangle v_n, \qquad v\in H. $$
Indeed, a computation as in \cite[Theorem 4.3]{bj} shows that diagonal of $P$ is as desired, i.e., $\langle Pe_i,e_i \rangle =f_i$. Hence, it remains to show that the mapping $x \mapsto P(x)\in \mathcal B(H)$ is weakly measurable. For any $k\in \N$,
\[
P e_k = \sum_{n=1}^N \langle e_k,v_n \rangle v_n = \sum_{n=1}^N V_{n,k} v_n = \sum_{n=1}^{\min((k+3)/2,N)} V_{n,k} v_n.
\]
The limitation in the last sum follows from the fact that $V_{n,k}=0$ for $k<minS(n-1)-1$ and the fact that $minS(n) \ge 2n$ for all $n<N$. Since a product of a measurable vector-valued function by a measurable scalar-valued function is measurable, the above claim implies that the mapping $x\mapsto P(x)e_k$ is measurable as well. Since $k\in \N$ is arbitrary, $P$ is a measurable projection.
\end{proof}

In order to weaken the assumptions of Theorem \ref{mainspec}, we need to introduce the concept of a measurable permutation.

\begin{definition}\label{defmu}
We say that $\pi: X \times \N \to \N$ is a {\it measurable permutation}, if for all $n\in\N$, $\pi(\cdot,n)$ is measurable and for all $x\in X$, $\pi(x,\cdot)$ is a permutation of $\N$. The inverse permutation $\pi^{-1}: X \times \N \to \N$ is given by 
\[
\pi^{-1}( x,n) = (\pi(x,\cdot))^{-1}(n), \qquad (x,n) \in X \times \N.
\]
\end{definition}

Note that if $\pi$ is measurable, then so is the inverse permutation. Indeed, for any $k\in\N$,
\[
\{(x,n): \pi^{-1}(x,n)=k\} = \{ (x,n) : \pi(x,k)=n\} 
\]
is a measurable set. For any $n\in \N$, we let $\pi(n): X \to \N$ to denote a function given by $\pi(n)(x)=\pi(x,n)$, $x\in X$.

We need the following two useful lemmas.

\begin{lemma}\label{permut} Let $\pi: X \times \N \to \N$ be a measurable permutation.
Suppose that $P: X \to \mathcal B(H)$ is a measurable projection with diagonal $(f_i)_{i\in \N}$. Then, there exists a measurable projection $\tilde P: X \to \mathcal B(H)$ with diagonal $(f_{\pi(i)})_{i\in \N}$.
\end{lemma}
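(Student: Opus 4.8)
The plan is to conjugate the given projection by a measurable unitary that implements the permutation $\pi$ on the fixed basis. Concretely, for each $x\in X$ I would define $U(x): H\to H$ on the basis by $U(x)e_i = e_{\pi(x,i)}$, extended linearly and continuously; since $\pi(x,\cdot)$ is a bijection of $\N$, $U(x)$ is a well-defined unitary operator for every $x$. Then I would set $\tilde P(x) = U(x)^* P(x) U(x)$. Because the map $i\mapsto \pi(x,i)$ is a permutation, $U(x)^*e_j = e_{\pi^{-1}(x,j)}$, and a direct computation gives
\[
\langle \tilde P(x) e_i, e_i\rangle = \langle P(x) U(x) e_i, U(x) e_i\rangle = \langle P(x) e_{\pi(x,i)}, e_{\pi(x,i)}\rangle = f_{\pi(i)}(x),
\]
which is exactly the desired diagonal. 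It is also immediate that $\tilde P(x)$ is an orthogonal projection, being a unitary conjugate of one.

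The remaining point — and the only one requiring care — is the weak operator measurability of $\tilde P$, i.e. that $x\mapsto \langle \tilde P(x) u, v\rangle$ is measurable for all $u,v\in H$. By Definition \ref{mp} it suffices to check this for basis vectors $u=e_i$, $v=e_j$. Expanding,
\[
\langle \tilde P(x) e_i, e_j\rangle = \langle P(x) e_{\pi(x,i)}, e_{\pi(x,j)}\rangle = \sum_{k,l\in\N} \mathbf 1_{\{\pi(i)=k\}}(x)\, \mathbf 1_{\{\pi(j)=l\}}(x)\, \langle P(x) e_k, e_l\rangle .
\]
For fixed $i,j$ the sets $\{\pi(i)=k\}$ and $\{\pi(j)=l\}$ are measurable (as $\pi(i),\pi(j): X\to\N$ are measurable functions), they partition $X$ as $k,l$ range over $\N$, and on each piece $\{\pi(i)=k\}\cap\{\pi(j)=l\}$ the function $\langle \tilde P(x)e_i,e_j\rangle$ equals the measurable function $x\mapsto\langle P(x)e_k,e_l\rangle$. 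This is precisely the situation of Lemma \ref{index} (applied with the indexing function $x\mapsto(\pi(x,i),\pi(x,j))$, or equivalently two successive applications after refining by $\{\pi(i)=k\}$) together with Lemma \ref{sklejanie} for the gluing. Hence $\langle\tilde P(\cdot)e_i,e_j\rangle$ is measurable, so $\tilde P$ is a measurable projection with diagonal $(f_{\pi(i)})_{i\in\N}$.

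I do not anticipate a genuine obstacle here: the construction is the obvious permutation conjugation, and all the work is bookkeeping to route the measurability through the two elementary lemmas already established. The one place to be slightly attentive is that $U(x)$ and $\tilde P(x)$ are honest operators on the separable Hilbert space $H$ (continuity/boundedness of $U(x)$ is clear since it permutes an orthonormal basis), and that the double sum above is really a disjoint case split rather than an infinite series to be summed — so no convergence issue arises.
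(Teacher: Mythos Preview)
Your argument is correct and follows the same approach as the paper: define the permutation unitary $U(x)e_i=e_{\pi(x,i)}$ and set $\tilde P=U^*PU$, then verify the diagonal and weak measurability. In fact you supply more detail than the paper, which simply asserts that ``a straightforward argument'' yields the measurability of $\tilde P$; your reduction to the partition $\{\pi(i)=k\}\cap\{\pi(j)=l\}$ and appeal to Lemmas~\ref{sklejanie} and~\ref{index} makes that step explicit.
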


\begin{proof}
A measurable permutation $\pi$ defines a permutation mapping $U: X \to \mathcal B(H)$ given by 
\[
U(x)e_i = e_{\pi(x,i)} \qquad i \in \N, x\in X.
\]
It is immediate that $x\mapsto U(x)$ is weakly measurable and $U(x)$ is a unitary operator. Define $\tilde P: X \to \mathcal B(H)$ by 
\[
\tilde P(x) = (U(x))^* \circ P(x) \circ U(x) \qquad\text{for }x\in X.
\]
A straightforward argument shows that $\tilde P$ is a measurable projection with diagonal $(f_{\pi(i)})_{i\in \N}$. Indeed, for any $(x,i) \in X \times \N$,
\[
\langle \tilde P(x) e_i, e_i \rangle = 
\langle  P(x) U(x) e_i, U(x) e_i \rangle 
= \langle P(x) e_{\pi(x,i)}, e_{\pi(x,i)} \rangle = f_{\pi(i)}(x).
\]
This proves the lemma.
\end{proof}

\lem{ordering}{Let $f_i:X\to [0,1]$, $i\in [k]:=\{1,\ldots,k\}$,  be a finite sequence of measurable functions. Then, there exists a measurable permutation $\pi: X \times [k] \to [k]$ such that functions $g_i=f_{\pi(i)}$, $i\in [k]$, are measurable and in (weakly) decreasing order. 
}

\begin{proof} Let $a_1,..,\breve{a}_i,..$ denotes the sequence $a_1,.,.a_{i-1},a_{i+1},..$ with $a_i$ omitted.
Define functions $g_i: X \to [0,1]$ by  
\[
\begin{aligned}
g_1&=\max(f_1,f_2,...,f_k), \\
g_2&=\min_{i\leq k} \max(f_1,..,\breve{f_i},..,f_k), 
\\
g_3 & =\min_{i<j\leq k} \max(f_1,...,\breve{f_i},..,\breve{f_j},..,f_k),
\end{aligned}
\]
and so on. For a fixed $x\in X$, define 
\[
\begin{aligned}
\pi(1) &= \min \{ i: f_i=g_1\}, 
\\
\pi(2) &= \min\{ i: f_i=g_2 \text{ and } i \ne \pi(1) \},
\\
\pi(3) &= \min\{ i: f_i=g_3 \text{ and } i \ne \pi(1), \pi(2) \},
\end{aligned}
\]
and so on. Then, one can show by induction that functions $g_i$ and $\pi(\cdot,i)$ are measurable for every $i=1,\ldots,k$.
Finally, the identity $g_i=f_{\pi(i)}$ follows by the above definition.
\end{proof}

%\rem{permuting}{If $\pi :\nat\to\nat$ is any permutation of the columns of the matrix $V$ (defining vectors $v_i$) we obtain new vectors $v_i^\prime$. The projection $P$ defined using vectors $v_i^\prime$ is an orthogonal projection with diagonal entries being $f_{\pi(i)}$.}

%In proving Theorem \ref{mainspec2} we will use the following strengthening of the above Remark.

%\rem{permeasure}{Let $\pi$ be a measurable permutation as in Definition \ref{defmu} and let $V^\pi$ be the matrix obtained by permuting the columns of matrix $V$ (i.e. in the $i$-th column of $V^\pi$ stands the column $\pi(i)$ of $V$. The entries of $V^\pi$ are measurable.} \begin{proof} Sets $\{\pi(i)=n\}$ are measurable, so on every such set the $i$-th column of $V^\pi$ is the $k$-th column of $V$ - consisting of measurable entries. So the entries in the column $i$ in $V^\pi$ is a ``glueing'' of countably many measurable functions - Lemma \ref{sklejanie}. \end{proof}

We now prove Theorem \ref{mainspec} under weaker assumptions.

\thm{mainspec2}{Suppose measurable functions $f_i:X\to [0,1]$, $i\in \N$, satisfy for all $x\in X$,
\begin{enumerate}[(i)]
\item $\sum_i f_i(x) =\infty$ or  $\sum_i f_i(x)\in\N$, and 
\item $f_i(x)\in [0,{1\over 2}]$ for all $i>1$.
\end{enumerate}
Then, there is a measurable projection $P: X \to \mathcal B( H)$ with diagonal $(f_i)_{i\in \N}$.}

\begin{proof} Lemma \ref{sklejanie} implies that, without loss of generality,  we can assume that there exists $N \in \N \cup \{\infty\}$ such that \eqref{in0} holds.
The idea of the proof is to find a measurable permutation $\pi:X \times \nat\to\nat$ if $N=\infty$, or $\pi:X \times [N] \to [N]$ if $N<\infty$, such that the permuted sequence $g_i=f_{\pi(i)}$ satisfies
\begin{equation}
\label{ord1}
g_{minS((g_i),n)-1}\geq g_{minS((g_i),n)}
\qquad\text{for all } 1\le n<N.
\end{equation}
That is, $(g_i)_{i\in\N}$ satisfies the assumption (iii) from Theorem \ref{mainspec}. For simplicity we shall assume that $N=\infty$; the case $N<\infty$ follows by obvious modifications.

To achieve this we merely follow the proof in \cite[Lemma 4.2]{bj}.
For every $x\in X$, we divide the sequence $f_i$ into blocks corresponding to intervals defined by $minS(n)$ given by
\begin{equation}\label{ord2}
I_n=\{i\in \N: minS(n-1)<i\leq minS(n)\}, \qquad n \in \N,
\end{equation}
with the convention that $minS(0)=0$.
On every such interval we order $(f_i)_{i\in I_n}$ in a decreasing order using Lemma \ref{ordering}. More precisely, for every $n\in \N$, there are countable many choices for an interval $I_n$. Restricting to $x\in X$ such that $I_n =[a,b]:=\{a,\ldots, b\}$ for some fixed $a<b\in \N$, we apply Lemma \ref{index} to $(f_i)_{i\in [a,b]}$ to get a local measurable permutation of $I_n$. We combine these permutations into one global permutation $\pi:X \times \nat\to\nat$ that sorts every block of functions $(f_i)_{i\in I_n}$ in a decreasing order. By Lemma \ref{index}, $\pi$ is a measurable permutation. Indeed, for a fixed $i\in \N$, measurable spaces $X$ splits into at most countably subsets indexed by triplets $(n,a,b) \in \N^3$ such that $i\in I_n=[a,b]$. Clearly, $\pi(\cdot,i)$ is measurable on each such subset and hence on $X$.

Define $g_i=f_{\pi(i)}$ for $i\in \N$. By Lemma \ref{index},  functions $g_i$ are measurable. We claim that that the sequence $(g_i)$ satisfies \eqref{ord1}. To see this we must consider functions $minS(n)$ corresponding to $(g_i)$.
The values of $minS(n)$ on the sequence $(g_i)$ may differ from  analogous values on $(f_i)$ by at most $2$. Indeed, by formula (4.7) in \cite[Lemma 4.2]{bj} we have
\begin{equation}\label{ord3}
minS((f_i),n-1)+2\leq minS((g_i),n)\leq minS((f_i),n), \qquad n\in\N.
\end{equation}
Since $(g_i)_{i\in I_n}$ is in decreasing order, \eqref{ord2} and \eqref{ord3} yields \eqref{ord1}.

By Theorem \ref{mainspec} there exists a measurable projection $\tilde P$ with diagonal $(g_i)_{i\in \N}=(f_{\pi(i)})_{i\in \N}$. Applying Lemma \ref{permut} for the inverse permutation $\pi^{-1}$ yields a measurable projection $P$ with diagonal $(g_{\pi^{-1}(i)})_{i\in \N} = (f_i)_{i\in \N}$.
This completes the proof of Theorem \ref{mainspec2}. 
\end{proof}

Our goal now is to prove Theorem \ref{mainspec2} without the assumption (ii).
To this end, we need some auxiliary functions. 

\defi{pos}{For a sequence of measurable functions $f_i:X\to [0,1]$, $i\in \N$, we
define functions $Pos(n):X\to\nat$ and $pos(n):X\to\nat$ as follows. Let $Pos(n)(x)=k$ if
$f_k(x)$ is the $n$-th number in the sequence $(f_i(x))$ that is greater than ${1\over 2}$. Likewise, we let $pos(n)(x)=k$ if $f_k(x)$ is the $n$-th number in the sequence $(f_i(x))$ that is $\leq{1\over 2}$.}

In order for $Pos(n)$ and $pos(n)$ to be defined on the whole $X$, we must assume that there are at least $n$ indices $i\in \N$ such that $f_i(x)> \frac12$ and $f_i(x) \le \frac12$, respectively.

\lem{position}{Functions $Pos(n)$ and $pos(n)$ are measurable for all $n$.}
\begin{proof} Fix $n$ and $k$. Sets $\{Pos(n)=k\}$ and $\{pos(n)=k\}$ are measurable since 
\[
\begin{aligned}
\{Pos(n)=k\} & =\bigcup_{1\leq k_1<...<k_n=k}
\bigcap_{i=1}^k \left(  \{ f_{k_i}>{\tfrac12}\} \cap\bigcap_{j\neq k_i,j<k} 
\{ f_j\leq{ \tfrac12}\}\right),
\\
\{pos(n)=k\} & =\bigcup_{1\leq k_1<...<k_n=k} 
\bigcap_{i=1}^k  \left(  \{f_{k_i}\leq{ \tfrac12}\} \cap\bigcap_{j\neq k_i,j<k}\{ f_j>{ \tfrac 12}\} \right).
\end{aligned}
\]
\end{proof}

As a corollary of Lemmas \ref{index} and \ref{position}, we obtain that $f_{Pos(n)}$ and $f_{pos(n)}$ are measurable functions. The following result is the next step toward proving Theorem \ref{main}.

\thm{mainspec3}{Suppose that measurable functions $f_i: X \to [0,1]$, $i\in \N$, satisfy for all $x\in X$, 
\begin{enumerate}[(i)]
\item
there are infinitely many $f_i(x) \le {1\over 2}$,
\item
there are infinitely many $f_i(x)>{1\over 2}$, and
\item 
$\sum_{i}f_{pos(i)}(x) =\infty$.
\end{enumerate}
Then, there is a measurable projection $P: X \to \mathcal B( H)$ with diagonal $(f_i)_{i\in \N}$.}

\begin{proof} To simplify the notation, we let $f^i=f_{pos(i)}$. Of course, $f^i: X \to [0,1/2]$, $i\in \N$ is a sequence of measurable functions. Let $2\N -1$ denote the set of odd numbers. 
We will use a partition of $\nat$ into sets $A^m$, given by 
\[
A^m=\{2^{k-1}m:k\in\nat\}, \qquad m\in 2\N-1.
\]
Next, we decompose $H$ as an orthogonal sum of subspaces 
\[
H^m = \ov{\spa} \{ e_i: i\in A^m\}, \qquad m\in 2\N-1.
\]
We also split the sequence $(f_i)_{i\in \N}$ into countably many subsequences $(g^m_i)_{i\in \N}$, where $m$ is odd, by the following procedure.

Let $minS(n)$, $n\in \N$, be a function as in Definition \ref{in} corresponding to the sequence $(f^i)_{i\in\N}$ rather than $(f_i)_{i\in\N}$. Let $I_n$, $n\in\N$, be an interval of $\N$ defined by \eqref{ord2}.  Recall that intervals $I_n$, $n\in\N$, form a partition of $\N$, which depends on a choice of $x\in X$. For $i\in\N$, define a function $n_i: X \to \N$ by
\[
n_i = \max\{ n\in \N: minS(n-1)<i \}.
\]
Equivalently, $n_i \in \N$ is a unique number such that $i \in I_{n_i}$. Each function $n_i$ is measurable since 
\[
\{n_i=n\} = \{minS(n-1)<i\} \cap \{minS(n) \ge i \}.
\]
Define a mapping $\pi: X \times (2\N-1) \times \N \to \N $ such that $\pi(x,m,i)$ is the $i$-th element of the (infinite) set
\begin{equation}\label{set}
\bigcup_{a\in A^m} I_{n_a} = \bigcup_{k\in \N} I_{n_{2^{k-1}m}}.
\end{equation}

{\bf Claim.} For every $(m,i) \in (2\N-1) \times \N$, the mapping $\pi(\cdot,m,i)$ is measurable. 
\begin{proof}
Since each interval $I_n$ is non-empty, the $i$-th element of the set \eqref{set} belongs to finitely many intervals $I_{n_{m2^{k-1}}}$, where $1\le k\le i$. Observe that there are only countably many choices for such intervals. Hence, we can split $X$ into a countable collection of measurable subsets on which
\[
I_{n_m} = [a_1,b_1], \ 
I_{n_{2m}} = [a_2,b_2],
\ldots,
I_{n_{2^{i-1}m}} = [a_i,b_i],
\]
for some choice of $a_1 \le b_1<a_2\le b_2< \ldots< a_i \le b_i \in \N$. Consequently, the function $\pi(m,i):=\pi(\cdot,m,i)$ takes a constant value on such measurable subsets.
By Lemma \ref{sklejanie}, $\pi(m,i)$ is measurable on $X$.
\end{proof}

Consequently, $\pi: X \times (2\N-1) \times \N \to \N $ is a measurable permutation after identifying $(2\N-1) \times \N$ with $\N$ using the mapping $(i,m) \mapsto 2^{i-1}m$.
For $m$ odd, define sequence $(g^m_i)_{i\in \N}$ as
\begin{equation}\label{set2}
g^m_i = \begin{cases}
f_{Pos({{m+1}\over 2})} & i=1,
\\
f^{\pi(m,i-1)} & i>1.
\end{cases}. 
\end{equation}
In particular,
\[
\begin{aligned}
(g^1_i) &=( f_{Pos(1)},f^1,\ldots,f^{minS(2)},
f^{minS(3)+1},\ldots,f^{minS(4)},
f^{minS(7)+1}, \ldots, f^{minS(8)},\ldots),
\\
(g^3_i)&=(f_{Pos(2)},f^{minS(2)+1},\ldots,f^{minS(3)},f^{minS(5)+1},\ldots,f^{minS(6)},f^{minS(11)+1},\ldots),
\\
(g^5_i)&=(f_{Pos(3)},f^{minS(4)+1},\ldots,f^{minS(5)},f^{minS(9)+1},\ldots,f^{minS(10)},f^{minS(19)+1},\ldots),
\end{aligned}
\]
and so on. By Lemma \ref{index} and the above claim we deduce that all functions $g^m_i$ are measurable. Since 
\[
\sum_{i=minS(n-1)+1}^{minS(n+1)}f^i\geq {1\over 2}\qquad\text{for } n\in \N,
\]
we have that for every odd $m$, $\sum_{i=1}^\infty g^m_i=\infty$ and
$g^m_i\leq{1\over 2}$ for $i>1$. Hence, by Theorem \ref{mainspec2} there is a measurable projection $Q^m$ on the space $H^m$ with diagonal 
$(g^m_i)_{i\in \N}$. Therefore,
\[
Q=\bigoplus_{m\in 2\N -1} Q^m \in \mathcal B\bigg(\bigoplus_{m\in 2\N -1} H^m \bigg) = \mathcal B(H)
\]
is a projection with diagonal $(g^m_i)_{i\in \N}^{m\in 2\N-1}$. 

We want, however, a projection with diagonal $(f_i)_{i\in \N}$. This is a consequence of the fact that the sequence $(g^m_i)_{i\in \N}^{m\in 2\N-1}$ is obtained from $(f_i)_{i\in \N}$ using a measurable permutation. More precisely, let $\tilde \pi:X\times\nat\to\nat$ be a measurable permutation defined for $x\in X$ by 
by 
\[
\tilde \pi(x,2^{i-1}m) = \begin{cases} 
Pos({{m+1}\over 2})(x) & i=1, m \in 2\N-1,
\\
pos(\pi(x,m,i-1))(x) & i>1, m\in 2\N-1.
\end{cases}. 
\]
Indeed, one can check that for every $x\in X$, $\pi(x,\cdot)$ is a permutation of $\N$. The measurability follows from the claim and Lemma \ref{index}. Furthermore, by \eqref{set2} we have $g^m_i = f_{\tilde \pi(2^{i-1}m)}$. Therefore, Lemma \ref{permut} yields a measurable permutation with diagonal $(f_i)_{i\in \N}$. \end{proof} 

We also need the following slight variant of Theorem \ref{mainspec3}.

\thm{mainspec4}{Let $k\in \N$. Suppose that measurable functions $f_i: X \to [0,1]$, $i\in \N$, satisfy for all $x\in X$, 
\begin{enumerate}[(i)]
\item
there are infinitely many $f_i(x) \le {1\over 2}$,
\item
there are exactly $k$ many $f_i(x)>{1\over 2}$, and
\item 
$\sum_{i}f_{pos(i)}(x) =\infty$.
\end{enumerate}
Then, there is a measurable projection $P: X \to \mathcal B( H)$ with diagonal $(f_i)_{i\in \N}$.
}

\begin{proof} The proof is a simple adaption of the proof of Theorem \ref{mainspec3} to the finite case. The main 
difference is that we split the sequence $(f_i)_{i\in \N}$ into $k$ finitely many subsequences $(g^m_i)_{i\in \N}$, where $m$ is 
the remainder of division by $k$. Hence,
\[
\begin{aligned} 
(g^1_i) &=(f_{Pos(1)},f_{pos(1)},f_{pos(k+1)},f_{pos(2k+1)}, f_{pos(3k+1)},\ldots ),
\\ 
(g^2_i)&=(f_{Pos(2)},f_{pos(2)}, f_{pos(k+2)}, f_{pos(2k+2)}, f_{pos(3k+2)},\ldots),
\\ 
&\ \ \vdots 
\\
(g^k_i)&=( f_{Pos(k)}, f_{pos(k)}, f_{pos(2k)}, f_{pos(3k)},f_{pos(4k)}, \ldots).
\end{aligned}
\] 
Similarly we decompose $H$ as an orthogonal sum of $k$ subspace $H^1,\ldots,H^k$. Then, Theorem \ref{mainspec2} yields a measurable projection with diagonal $(g^m_i)_{i\in \N}$ for every $m=1, \ldots,k$. Then, an application of Lemma \ref{permut} yields the required measurable projection. We leave the details to the reader.
\end{proof} 

Combining Theorems \ref{mainspec3} and \ref{mainspec4} yields the following corollary

\cor{mainspec5}{ Suppose that measurable functions $f_i: X \to [0,1]$, $i\in \N$, satisfy for all $x\in X$, 
\begin{enumerate}[(i)]
\item
there are infinitely many $f_i(x) \le {1\over 2}$, and
\item
$\sum_{i}f_{pos(i)}(x) =\infty$.
\end{enumerate}
Then, there is a measurable projection $P: X \to \mathcal B( H)$ with diagonal $(f_i)_{i\in \N}$.
}

\begin{proof} For fixed $k\in\N \cup \{0\}$, the set $X_k$ of $x\in X$ such that the set $\{i\in \N: f_i(x)> {1\over 2}\}$ has $k$ elements is measurable. Indeed,
\[
X_k=\bigcup_{1\leq i_1<...<i_k}  \bigcap_{j=1}^k \bigg( \{f_{i_j}>{\tfrac 12} \} 
\cap \bigcap_{n\ne i_j} \{f_n\leq{ \tfrac12}\}\bigg).
\]
In particular, the set $X_\infty=\{x\in X: \text{there are infinitely many } f_i(x)>{1\over 2}\}$ 
is measurable as $X_\infty=X\setminus\bigcup_k X_k$. 

By Theorems \ref{mainspec2}, \ref{mainspec3}, and \ref{mainspec4} there exists measurable projections $P_0$, $P_k$, $k\in\N$, and $P_\infty$ defined 
on sets $X_0$, $X_k$, and $X_\infty$, respectively. A projection $P=\bigcup_{k\in \N \cup \{0\}} P_k \cup P_\infty$ is defined on the entire $X$ 
and is measurable by Lemma \ref{sklejanie}. This completes the proof of Theorem \ref{mainspec5}.
\end{proof} 

We are now ready to complete the proof of Theorem \ref{main}.

\begin{proof} 
Let $f^i=f_{pos(i)}$, see Definition \ref{pos}. Let
\[ S_\infty=\{x \in X:\sum_i f^i(x)=\infty\} , \qquad 
S_{<\infty}=\{x \in X:\sum_i f^i(x)<\infty\}
\]
The sets $S_\infty$ and $S_{<\infty}$ are measurable. Applying Corollary \ref{mainspec5} on the set 
\[
S_\infty = \{x\in X: a(x)=\infty\}
\]
yields a measurable projection $P: S_\infty \to \mathcal B(H)$ with diagonal $(f_i)_{i\in \N}$.
On the set $\{x\in X: a(x)<\infty\}=S_{<\infty}$ we must have $b(x)=\infty$. By the previous case there is a measurable projection $P^\prime$ on $S_{<\infty}$ with diagonal $(1-f_i)_{i\in \N}$. Hence, $P=\mathbf{I}-P^\prime$ is a measurable projection on $S_{<\infty}$ with diagonal $(f_i)_{i\in \N}$. Applying Lemma \ref{sklejanie} for $X=S_\infty \cup S_{<\infty}$ yields the desired measurable projection.
\end{proof}

\section{Summable case}\label{S3}

The aim of this section is to prove the summable counterpart of Theorem \ref{main}. 

\thm{mainfin}{Let $f_i:X\to [0,1]$, $i\in \N$, be measurable functions. Define for every $x\in X$ 
$$ a(x)=\sum_{f_i(x)\leq {1\over 2}} f_i(x),
\qquad
b(x)=\sum_{f_i(x)>{1\over 2}} 1-f_i(x).$$ 
Assume for all $x\in X,~ a(x), b(x)<\infty$ and $a(x)-b(x)\in\inte$. \newline 
Then, there is a measurable projection $P: X \to \mathcal B( H)$ with diagonal $(f_i)_{i\in \N}$.
} 

We start from the easiest rank one case.

\lem{sum1}{Assume $\sum_i f_i(x)=1$ for all $x\in X$. Then, there is a measurable projection $P: X \to \mathcal B( H)$ with diagonal $(f_i)_{i\in \N}$.} 

\begin{proof} Fix an orthonormal basis $(e_i)$ of $H$ and define a vector-valued function $v_0=\sum_i \sqrt{f_i} \cdot e_i$. 
A projection $P$ onto one dimensional space given by 
$P v=\langle v_0,v\rangle v_0$ is the desired projection. Indeed, 
$Pe_i= \langle v_0,e_i \rangle v_0= \langle \sum_j\sqrt{f_j}e_j, e_i \rangle v_0=\sqrt{f_i}\cdot v_0$ and the entries of $P$ are 
$$\langle Pe_j,e_i\rangle =\langle\sqrt{f_j}\cdot v_0,e_i\rangle=\sqrt{f_j}\langle v_0,e_i\rangle=\sqrt{f_j}\cdot\sqrt{f_i}~.$$ 
\end{proof}

We will need a measurable variant of the Schur-Horn theorem \cite{horn, schur}. Suppose that $(f_1,\ldots,f_n)$ and $(\lambda_1,\ldots,\lambda_n)$ are sequences in $\real^n$. 
Let $(\lambda_i^\downarrow)_{i=1}^n$ and $(f_i^\downarrow)_{i=1}^n$ be their decreasing rearrangements. Following \cite{moa} we introduce a majorization order $(f_1,\ldots,f_n) \preceq (\lambda_1,\ldots,\lambda_n)$ if and only if
\begin{equation}\label{horn1}
\sum_{i=1}^{n}f^\downarrow_i =\sum_{i=1}^{n}\lambda^\downarrow_{i} \quad\text{ and }\quad \sum_{i=1}^{k}f^\downarrow_{i} \leq \sum_{i=1}^{k} \lambda^\downarrow_{i} \quad\text{for all } 1\le k \le n.
\end{equation}

The following result is a variant of result shown by Benac, Massey, and Stojanoff \cite{bms, bms2}. Although it was originally stated in the setting of measure spaces with almost everywhere majorization, it also holds for measurable spaces. This result also holds for real Hilbert spaces $H_n$, though the proof in \cite{bms} is shown only in the complex case. As we will see in Section \ref{S4}, Theorem \ref{obce} answers Problem \ref{psh} for type I$_n$ von Neumann algebras.

\thm{obce}{\cite[Theorem 5.1]{bms} Let $A:X\to \mathcal B(H_n)$ be a measurable field of $n\times n$ self-adjoint matrices  
with associated measurable eigenvalues $\lambda_i:X\to\real$, $i=1,\ldots,n$, such that $\lambda_1\geq\lambda_2\geq \ldots \geq\lambda_n$. 
Let $f_i:X\to\real$, $i=1,..,n$, be measurable functions. 
The following are equivalent: 
\begin{enumerate}[(i)]
\item $(f_1(x),\ldots,f_n(x)) \preceq (\lambda_1(x),\ldots,\lambda_n(x))$ for all $x\in X$.
\item There is a measurable field of unitary matrices $U: X \to \mathcal U(H_n)$ such that matrices $U^*(x)A(x)U(x)$ has the 
diagonal $(f_1(x),\ldots,f_n(x))$ for all $x\in X$.
\end{enumerate}
}

From Theorem \ref{obce} we can draw the following corollary. 

\thm{skonczone}{Let $f_i:X\to [0,1]$, $i=1,..,n$, be measurable functions and $\sum f_i(x)\in\nat$ for all $x$. 
Then, there is a measurable projection $P: X \to \mathcal B(H_n)$ with diagonal $(f_i)_{i\in \N}$.} 
\begin{proof} Sets $X_k=\{x\in X:\sum f_i(x)=k\}$ are measurable.
On every $X_k$ we have 
\[
(f_1(x),\ldots,f_n(x))\preceq (\lambda_1,\ldots,\lambda_n),\qquad\text{where }\lambda_1=\ldots=\lambda_k=1, \ \lambda_{k+1}=\ldots=\lambda_n=0.
\]
For $x\in X$, let $A_k(x)$ be a diagonal $n\times n$ matrix with diagonal $ \lambda_1(x),\ldots,\lambda_n(x)$. By Theorem \ref{obce} 
there is measurable unitary field of $n\times n$ matrices $U$  such that the matrix $U^*(x)A_k(x)U(x)$ has diagonal $(f_i(x))_{i=1}^n$. 
The mapping $x\mapsto U^*(x)A_k(x)U(x)$ is the required measurable projection.
\end{proof}

First we prove Theorem \ref{mainfin} in the following special case.

\begin{theorem}\label{mainfin2} Let $2\le N\in \N$ or $N=\infty$. Theorem \ref{mainfin} holds under the additional assumption that for all $x\in X$,
\begin{enumerate}[(i)]
\item $f_i(x) \in (0,1)$ for all $i\in \N$, 
\item
there are exactly $N$ indices $i\in\N$ such that $f_i(x) \in (0,1/2]$, and
\item
there are infinitely many $f_i(x) \in (1/2,1)$.
\end{enumerate}
\end{theorem}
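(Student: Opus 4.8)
The plan is to \emph{decouple} the diagonal $(f_i)$ into countably many subsequences, each realizable by a result already available --- Lemma~\ref{sum1} for infinite--dimensional rank one pieces, Theorem~\ref{skonczone} for finite--dimensional pieces, and Theorem~\ref{mainspec2} for infinite pieces with all but the first entry $\le\tfrac12$ --- and then to splice the corresponding projections back together. The splicing will be done, as in the proof of Theorem~\ref{mainspec}, by means of vectors whose supports overlap in two coordinates across each seam, the orthogonality at a seam being produced by the algebraic identity \eqref{numbera1} of Lemma~\ref{numbera}.

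First I would stratify. Since $a,b$ are measurable, Lemma~\ref{sklejanie} lets me argue separately on each set $\{a-b=m\}$, $m\in\Z$. On such a set I extract the measurable ``large'' sequence $L_k:=f_{Pos(k)}$, $k\in\N$, and ``small'' sequence $s_j:=f_{pos(j)}$, $1\le j\le N$ --- measurable by Lemma~\ref{position} and Lemma~\ref{index} --- noting $L_k>\tfrac12$, $s_j\le\tfrac12$, $\sum_k(1-L_k)=b<\infty$, $\sum_j s_j=a=b+m$, and (since $b<\infty$) $L_k\to1$. The decoupling then matches the small entries against the ``deficits'' $1-L_k$: processing in order, I build for each $k$ a piece consisting of $L_k$ followed by small entries (splitting at most one small entry at the right end) whose total is exactly $1-L_k$, so that the piece sums to $1$; sorting the non--leading entries of each piece into decreasing order (Lemma~\ref{ordering}) and using that the piece sum is exactly an integer guarantees, exactly as in Theorem~\ref{mainspec}, that the associated $minS(n)$ never falls on a ``large'' entry and that hypothesis~(iii) of Theorem~\ref{mainspec} holds. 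Because $\sum_k(1-L_k)=b$ while the small entries total $a=b+m$, the material not used up after all deficits are matched totals $|m|$; this residue is gathered into one further piece --- a finite piece of integer sum $m$ when $a\ge b$ and $N<\infty$ (handled by Theorem~\ref{skonczone}), an infinite piece of integer sum $m$ with all entries $\le\tfrac12$ when $a\ge b$ and $N=\infty$ (handled by Theorem~\ref{mainspec2}), and, when $a<b$, the complement of a piece built from the uncovered tail $(1-L_k)_{k>K}$ (with at most one entry modified), which has integer sum $b-a$ and all entries $<\tfrac12$ (again Theorem~\ref{mainspec2}). If $m=0$ there is no residual piece. Consecutive pieces share exactly the split entries, which is precisely the situation the two--coordinate seams of Theorem~\ref{mainspec} are designed for.

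To assemble, I transport $(f_i)$ to this block arrangement by a measurable permutation (Lemma~\ref{permut}), build the orthonormal family $\{v_n\}$ piece by piece with the seams, and set $P=\sum_n\langle\,\cdot\,,v_n\rangle v_n$; the verification that $\{v_n\}$ is orthonormal and that $P$ has diagonal $(f_i)$ is the same direct computation as in Theorem~\ref{mainspec}, with Lemma~\ref{sum1}, Theorem~\ref{skonczone} and Theorem~\ref{mainspec2} supplying the sum--one and residual pieces. The step I expect to be the main obstacle is carrying all of this out \emph{measurably}. The positions of the block boundaries, the indices of the split entries, and the split values depend on $x$ in an intricate combinatorial way, so I would further partition each $\{a-b=m\}$ into countably many measurable sets on which this combinatorial pattern is locally constant, using that $S_i$, $minS(n)$, $Pos(n)$, $pos(n)$, the functions $f_{minS(n)}$, $f_{Pos(n)}$, $f_{pos(n)}$, and the seam parameters $\sigma_n,a_n$ of Lemma~\ref{numbera} are all measurable (Corollary~\ref{fminS(n)}, Lemma~\ref{position}, Lemma~\ref{index}); on each stratum the coordinates of the $v_n$ are then measurable exactly as in the Claim inside the proof of Theorem~\ref{mainspec}. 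Gluing over the strata and over $m\in\Z$ by Lemma~\ref{sklejanie}, and undoing the complement on the strata where $a<b$, finishes the proof. The two delicate points are thus: (a) organizing the decoupling so that every piece genuinely satisfies the hypotheses of Lemma~\ref{sum1}, Theorem~\ref{skonczone}, or Theorem~\ref{mainspec2} --- in particular enforcing exact piece--sums so that the ordering hypothesis of Theorem~\ref{mainspec} survives at the seams --- and (b) the measurable stratification, which as usual reduces to the lemmas on measurable permutations and selection.
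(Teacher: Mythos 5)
There is a genuine gap, and it is in the decoupling itself rather than in the measurability bookkeeping you flag as the main obstacle. Your scheme matches the small entries $s_j=f_{pos(j)}$ against the individual deficits $1-L_k$, splitting ``at most one small entry at the right end'' of each piece, and you rely on the two--coordinate seam mechanism of Theorem~\ref{mainspec} (Lemma~\ref{numbera}, identity \eqref{numbera1}) to handle the split entries. That mechanism only works when a diagonal entry is shared between \emph{two consecutive} vectors $v_k,v_{k+1}$. But under the hypotheses of Theorem~\ref{mainfin2} the deficits $1-L_k$ tend to $0$ while $N$ may be as small as $2$, so a single small entry must in general be fragmented across arbitrarily many pieces. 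Concretely, take $N=2$, $s_1=s_2=\tfrac14$, and $1-L_k=2^{-k-1}$ for $k\in\N$: piece $1$ absorbs all of $s_1$, and then $s_2$ must supply the deficits $\tfrac18,\tfrac1{16},\tfrac1{32},\dots$ of \emph{every} subsequent piece, so infinitely many of your $v_k$ would have to be supported on the coordinate of $s_2$ and still be pairwise orthogonal. No two--coordinate seam produces this, and you give no construction that does. (A related problem: each of your pieces is led by an entry $L_k>\tfrac12$, so the concatenated sequence violates hypothesis (ii) of Theorem~\ref{mainspec}, which is exactly what guarantees the seams are at least two coordinates apart.)

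The paper's proof avoids seams entirely and decouples in the opposite direction: it forms only \emph{three} groups --- a group $\{\tilde b_{i_3}\}\cup\{a_i:i\ge i_4\}$ of sum $1$ and a group $\{\tilde a_{i_2}\}\cup\{b_i:i\ge i_5,\,i\ne i_3\}$ whose complementary entries sum to $1$ (both handled by Lemma~\ref{sum1}), and one finite group of integer sum handled by Theorem~\ref{skonczone}. Only the three entries $a_{i_1},a_{i_2},b_{i_3}$ are modified, the cut points $i_4,i_5$ being chosen so that the tails fit under $1$; the groups are then genuine orthogonal direct summands. The original diagonal is recovered not by seams but by a single application of the measurable Schur--Horn theorem (Theorem~\ref{obce}) to the $3\times3$ principal submatrix in the three modified coordinates, using the majorization $(b_{i_3},a_{i_1},a_{i_2})\preceq(\tilde b_{i_3},\tilde a_{i_1},\tilde a_{i_2})$ of \eqref{ts0}. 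If you want to rescue your approach you would need a coupling device that distributes one diagonal entry over infinitely many mutually orthogonal vectors, which is a substantially different (and unproved) construction; the $3\times3$ Schur--Horn correction is the idea your proposal is missing.
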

 
\begin{proof}
Using Definition \ref{pos} we split the sequence $(f_i)_{i\in \N}$ into two sequences $(a_i)_{i=1}^N$ and $(b_i)_{i \in \N}$ defined as $a_i=f_{pos(i)}$ and $b_i=f_{Pos(i)}$. 
By Lemma \ref{position} functions $a_i:X\to (0,{1\over 2}],~b_i:X\to ({1\over 2},1)$ are measurable. Observe that 
\[
a(x)=\sum_{i=1}^N a_i(x), \qquad b(x)=\sum_{i=1}^\infty (1-b_i(x)).
\] 
%%??
First we will construct a projection $P$ with diagonal that coincides with $(f_i)$ except three terms $(a_{i_1},~a_{i_2},~b_{i_3})$.

For $x\in X$, define
\[
i_1(x)= \begin{cases} 1 & a_1(x) \ge a_2(x), \\
2 & \text{otherwise},
\end{cases}
\qquad
i_2(x)=3-i_1(x).
\]
Hence, $i_1,i_2: X \to \{1,2\}$ are measurable and 
 $a_{i_1}(x)\geq a_{i_2}(x)$ for all $x$. 
 
For $x\in X$, let $i_3(x)$ be the smallest $i$ with $b_i(x)\geq 1-a_{i_1}(x)$. 
The number $i_3(x)$ is well defined since the sum $b(x)$ is finite and hence $\lim_{i\to \infty} b_i(x) =1$. 
Function $i_3:X\to\nat$ is measurable since 
\[
\{i_3=k\}=\bigcap_{i=1}^{k-1} \{ b_i<1-a_{i_1} \} \cap \{ b_k\geq 1-a_{i_1}\} \qquad k\in \N.
\]
Let $i_4(x)$ be the smallest $k\in \N$, $k\ge 3$, such that 
$$ b_{i_3}(x)+ \sum_{i=k}^N a_i(x) \leq 1. $$ 
Let $i_5(x)$ be the smallest $k\in\N$ such that
$$ 1-a_{i_2}(x)+\sum_{i=k, i\ne i_3}^\infty (1-b_i(x)) \leq 1.$$ 
The existence of $i_5(x)$ follows from the convergence of the series defining $b(x)$. Note that if $N<\infty$ it might happen that $i_4(x)=N+1$, but this does not affect our construction.

%\begin{lemma}
\lem{i4i5}{
Functions $i_4, i_5:X\to\nat$ are measurable. }
%\end{lemma}

\begin{proof}
By Lemma \ref{index} function $b_{i_3}$ is measurable. 
For any $n\ge 3$  we have
\[
\{ i_4=n\}=  \bigcap_{k=3}^{n-1} \bigg \{ b_{i_3}+\sum_{i=k}^N a_i >1\bigg\} 
\cap\bigg\{ b_{i_3}+\sum_{i=n}^N a_i \leq 1\bigg\}.
\]
Hence, $i_4$ is measurable. Likewise, for any $n\ge 1$ we have
\[
\{i_5=n\}
= \bigcap_{k=1}^{n-1} \bigg\{ 1-a_{i_2}+\sum_{i=k, i \ne i_3}^\infty (1-b_i) >1\bigg\} 
\cap \bigg\{1-a_{i_2}+\sum_{i=n,i\ne i_3}^\infty (1-b_i) \leq 1\bigg\}.
\]
Hence, $i_5$ is also measurable.
\end{proof}

With the above definitions we have $i_4\ge 3$, 
\begin{equation}\label{sumaa}  b_{i_3}+\sum_{i=i_4}^N a_i  \leq 1
\end{equation}
and 
\begin{equation}\label{sumbb}
 1-a_{i_2}+\sum_{i=i_5, ~ i\ne i_3}^\infty (1-b_i)  \leq 1. 
 \end{equation}

Define functions $\tilde{b}_{i_3}(x)$ and $\tilde{a}_{i_2}(x)$  such that 
\begin{equation}
\label{suma}  \tilde{b}_{i_3}(x)+\sum_{i=i_4}^N a_i(x) = 1
\end{equation}
and 
\begin{equation}
\label{sumb} 1-\tilde{a}_{i_2}(x)+\sum_{i=i_5,~i\ne i_3}^\infty (1-b_i(x))= 1. 
\end{equation}
Measurability of functions $\tilde{b}_{i_3}, \tilde{a}_{i_2}:X\to [0,1]$ follows from Lemma \ref{i4i5}. 
Finally, $\tilde{a}_{i_1}(x)$ is defined by the equality 
\begin{equation}\label{summ}
\tilde{a}_{i_1} + \tilde{a}_{i_2} + \tilde{b}_{i_3}~=~a_{i_1} + a_{i_2} + b_{i_3}.
\end{equation} 
Since $a_{i_1}+b_{i_3}\geq 1$ and $a_{i_2}\geq \tilde{a}_{i_2}$, we have $0\le \tilde{a}_{i_1}\le 1$. 
Hence, $\tilde{a}_{i_1}:X\to [0,1]$ is measurable as a sum/difference of measurable functions. 
By \eqref{sumaa}, \eqref{sumbb}, \eqref{suma}, and \eqref{sumb}, 
\[
\tilde a_{i_2} \le a_{i_2} \le a_{i_1}, \ \tilde a_{i_1} \le  b_{i_3} \le \tilde b_{i_3}.
\] 
Hence, by \eqref{summ} we have  
\begin{equation}\label{ts0}
 (b_{i_3}, a_{i_1}, a_{i_2})\preceq (\tilde b_{i_3}, \tilde a_{i_1}, \tilde a_{i_2}).
\end{equation} 
We also claim that 
\begin{equation}\label{ts}
\tilde{a}_{i_1}  +\quad\sum_{i=3}^{i_4-1}a_i  + \sum_{i=1, ~i\ne i_3}^{i_5-1} b_i \in \nat .
\end{equation}
Indeed, by \eqref{suma} and \eqref{sumb}
\[
\begin{aligned}
\inte&\ni\sum_{i=1}^N a_i-\sum_{i=1}^\infty (1-b_i)= 
\sum_{i=3}^N a_i-\sum_{i=1,~ i\ne i_3}(1-b_i)  +a_{i_1} +a_{i_2}-(1-b_{i_3}) 
\\
&=\sum_{i=3}^N a_i-\sum_{i=1,~ i\ne i_3}(1-b_i) +\tilde{a}_{i_1}+\tilde{a}_{i_2}+\tilde{b}_{i_3}-1  
\\
%&=\sum_{i=1,~ i\ne i_1,i_2}^{{i_4}-1} a_i-\sum_{i=1,~ i\ne i_3}^{{i_5}-1} (1-b_i) +  
%\sum_{i=i_4,~ i\ne i_1,i_2}^\infty a_i-\sum_{i=i_5,~ i\ne i_3}^\infty (1-b_i) + \tilde{a}_{i_1}+\tilde{a}_{i_2}+\tilde{b}_{i_3}-1 \\
& =\sum_{i=3}^{{i_4}-1} a_i-\sum_{i=1,~ i\ne i_3}^{{i_5}-1} (1-b_i)+\tilde{a}_{i_1} +  
 \bigg(\tilde{b}_{i_3}+\sum_{i=i_4}^N a_i\bigg)  
- \bigg(1-\tilde{a}_{i_2}+\sum_{i=i_5,~ i\ne i_3}^\infty (1-b_i)\bigg) 
\\
&
= \sum_{i=3}^{{i_4}-1} a_i + \sum_{i=1,~ i\ne i_3}^{{i_5}-1} (b_i-1) + \tilde{a}_{i_1}. 
\end{aligned}
\]

We divide functions $a_i,~b_i,~\tilde{a}_{i_1},~\tilde{a}_{i_2},~\tilde{b}_{i_3}$ into three groups:
\begin{enumerate}[(I)]
\item $\{a_i: 3 \le i<i_4\}~\cup~\{b_i: 1\le i<i_5,~i\ne i_3\}~\cup~\{\tilde{a}_{i_1}\}$,
\item $\{a_i:~i\geq i_4 \}~\cup~\{\tilde{b}_{i_3}\}$, 
\item $\{b_i:~i\geq i_5,~i\ne i_3\}~\cup~\{\tilde{a}_{i_2}\}$.
\end{enumerate}
The sum of numbers in group I is a natural number by \eqref{ts}. 
The sum of numbers in both groups II and III is one by \eqref{suma} and \eqref{sumb}. 
Using the notation from the proof of Lemma \ref{ordering} we order functions in these groups in the following sequences:
\[
\begin{aligned}
(p_i^I)& =(\tilde{a}_{i_1},a_3,\ldots,a_{i_4-1},b_1,b_2,\ldots,\breve{b}_{i_3},\ldots,b_{i_5-1}),
\\
(p_i^{II}) &=(\tilde{b}_{i_3},a_{i_4}, a_{i_4+1},\ldots),
\\
(p_i^{III}) &=(\tilde{a}_{i_2},b_{i_5}, b_{i_5+1},\ldots,\breve{b}_{i_3},\ldots).
\end{aligned}
\]

Note that the sequence $(p_i^{I})$ is finite, but it has a variable length. However, we can decompose the space $X$ into measurable sets $X_n$, such that the sequence $(p_i^I)$ has length $n$ on each set $X_n$. Likewise, if $N<\infty$, then the sequence $(p_i^{II})$ has variable length as well. However, for the sake of simplicity we shall assume that $N=\infty$; hence, $(p_i^{II})$ is infinite. The case $N<\infty$ is a simple modification and it does not cause any difficulties.
 
Sequences $(p_i^I)$, $(p_i^{II})$, and $(p_i^{III})$ consist of measurable functions by the following easy lemma. 

\lem{shift}{Let $s:X\to\nat,~f_i:X\to\real$ be measurable. Then, a function at any fixed position in the sequence $f_1,\ldots,\breve{f}_s, \ldots$ is measurable. The same is true for the sequence
$f_s,f_{s+1},\ldots $} 
\begin{proof}
Restricting to the set $\{s=k\}$ for a given $k\in \N$ yields the required conclusion. 
\end{proof} 

Let $(e_i)_{i\in\N}$ be an orthonormal basis of $H$. For fixed $n\in \N$ define subspaces
\[
H^I_n = \spa\{e_1,\ldots,e_n\},
\qquad
H^{II}_n = \ov{\spa}\{e_{n+1},e_{n+3},\ldots\},
\qquad
H^{III}_n= \ov{\spa}\{e_{n+2},e_{n+4},\ldots \}.
\]
Applying Theorem \ref{skonczone} to the sequence $(p^I_i)$ of length $n$, we obtain a measurable projection $P^I_n: X_n \to \mathcal B(H^I_n)$  with diagonal $(p^I_i)_{i\in [n]}$. 
Applying Lemma \ref{sum1} we obtain measurable projections $P^{II}_n$, $P^{III}_n$ with diagonals $(p^{II}_i)_{i\in\N}$, $(p^{III}_i)_{i\in\N}$, resp.  Consequently, 
\[P_n: X_n \to \mathcal B(H), \qquad
P_n= P_n^I\oplus P_n^{II}\oplus P_n^{III}
\]
is a measurable projection with diagonal
\begin{equation}\label{pp}
(p^I_1,\ldots,p^I_n,p^{II}_1,p^{III}_1,p^{II}_2,p^{III}_2, p^{II}_3,p^{III}_3,\ldots).
\end{equation}
Define $P=\bigcup_{n\in\N} P_n: X \to \mathcal B(H)$. The sequence \eqref{pp} can be written as
\[
\tilde f_{\pi(1)},f_{\pi(2)},\ldots,f_{\pi(n)}, \tilde f_{\pi(n+1)}, \tilde f_{\pi(n+2)}, f_{\pi(n+3)}, f_{\pi(n+4)},\ldots
\]
for some measurable permutation $\pi: X \times \N \to \N$, where
\begin{equation}\label{pp2}
\tilde f_{\pi(1)} = \tilde a_{i_1}, \qquad \tilde f_{\pi(n+1)}=\tilde b_{i_3},\qquad \tilde f_{\pi(n+2)}= \tilde a_{i_2}.
\end{equation}
For each $n$ define subspaces
\[
H^{IV}_n = \spa\{e_1,e_{n+1},e_{n+2}\},
\qquad
H^{V}_n = H \ominus H^{IV}_n= \ov{\spa}\{e_{2},\ldots,e_{n}, e_{n+3},e_{n+4},\ldots\}.
\]
Define a measurable field of $3\times 3$ self-adjoint matrices $A_n: X_n \to \mathcal B(H^{IV}_n)$ by 
\[
A_n = 
\begin{bmatrix} \langle P e_1,e_1 \rangle  & \langle P e_{n+1},e_1 \rangle& \langle P e_{n+2},e_1 \rangle\\
\langle P e_1,e_{n+1} \rangle & \langle P e_{n+1},e_{n+1} \rangle & \langle P e_{n+2},e_{n+1} \rangle \\
\langle P e_1,e_{n+2} \rangle & \langle P e_{n+1},e_{n+2} \rangle & \langle P e_{n+2},e_{n+2} \rangle  
\end{bmatrix}
=
\begin{bmatrix} \tilde f_{\pi(1)} & * & * \\
* & \tilde f_{\pi(n+1)} & *\\
*& * & \tilde f_{\pi(n+2)} 
\end{bmatrix}.
\]
By \eqref{ts0} we have 
\[
(f_{\pi(n+2)}(x),f_{\pi(1)}(x) ,f_{\pi(n+1)} (x)) \preceq (\tilde f_{\pi(n+2)}(x),\tilde f_{\pi(1)}(x) ,\tilde f_{\pi(n+1)} (x)) \qquad x \in X_n .
\]
Applying Theorem \ref{obce}, there exists a measurable field of unitaries $U_n: X_n \to \mathcal B(H^{IV}_n)$ such that $U_n^*(x) A(x) U_n(x)$ has diagonal $(f_{\pi(1)}(x), f_{\pi(n+1)}(x),f_{\pi(n+2)} (x))$. 

Define a measurable projection 
\[
Q_n: X_n \to \mathcal B(H) \qquad Q_n = (U_n \oplus \mathbf I)^*P_n(U_n \oplus \mathbf I),
\]
where $\mathbf I$ is the identity on $H^V_n$. Define $Q=\bigcup_{n\in\N} Q_n: X \to \mathcal B(H)$. By our construction, $Q$ has diagonal $(f_{\pi(1)},f_{\pi(2)},\ldots)$. Since $\pi$ is a measurable permutation, Lemma \ref{permut} yields the desired measurable projection $P$. 
\end{proof}

The following result removes the last two superfluous hypotheses in Theorem \ref{mainfin2}.

\begin{theorem}\label{mainfin3}
Theorem \ref{mainfin} holds under the additional hypothesis that $f_i(x) \in (0,1)$ for all $i\in \N$ and $x\in X$.
\end{theorem}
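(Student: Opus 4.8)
The plan is to split $X$ into finitely many measurable pieces on each of which the diagonal has a special shape, to treat each piece by a result already proved -- Theorem \ref{mainfin2} or Theorem \ref{mainspec2}, possibly after the substitution $f_i\mapsto 1-f_i$ and/or a measurable permutation -- and to reassemble the resulting projections by Lemma \ref{sklejanie}. For $x\in X$ let $p(x),q(x),r(x)$ denote the numbers of indices $i$ with $f_i(x)<\tfrac12$, with $f_i(x)=\tfrac12$, and with $f_i(x)>\tfrac12$, respectively; these are cut out by measurable conditions, partition $\N$, and satisfy $p(x)+q(x)+r(x)=\infty$. Since $\tfrac12 q(x)\le a(x)<\infty$, we always have $q(x)<\infty$, and this is what will make the case analysis terminate.

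First I would dispose of the ``generic'' region. On the measurable set where $\#\{i:f_i(x)\le\tfrac12\}\ge 2$ and $\#\{i:f_i(x)>\tfrac12\}=\infty$, I split further by the exact (possibly infinite) value of $\#\{i:f_i(x)\le\tfrac12\}$ and apply Theorem \ref{mainfin2} on each part. On the measurable set where $\#\{i:f_i(x)<\tfrac12\}=\infty$ and $\#\{i:f_i(x)\ge\tfrac12\}\ge 2$, I pass to $g_i=1-f_i$; a direct computation gives $a^g=\sum_{f_i\ge 1/2}(1-f_i)<\infty$, $b^g=\sum_{f_i<1/2}f_i<\infty$ and $a^g-b^g=q(x)-(a(x)-b(x))\in\Z$, so $(g_i)$ meets the hypotheses of Theorem \ref{mainfin} and now falls under Theorem \ref{mainfin2}; then $\mathbf I$ minus the projection it produces has diagonal $(f_i)$.

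A short combinatorial argument -- the point at which $q(x)<\infty$ is used -- shows that every remaining $x$ lies in one of four measurable sets: (1) $f_i(x)\le\tfrac12$ for all $i$; (2) $f_i(x)\ge\tfrac12$ for all $i$; (3) exactly one $f_i(x)<\tfrac12$ and $f_i(x)>\tfrac12$ otherwise; (4) exactly one $f_i(x)>\tfrac12$ and $f_i(x)<\tfrac12$ otherwise. On set (1), $b(x)=0$, hence $\sum_i f_i(x)=a(x)\in\N$, and since every $f_i\le\tfrac12$ Theorem \ref{mainspec2} applies directly. Set (2) is the complement of set (1): apply the previous case to $g_i=1-f_i$ (for which $\sum_i g_i=a(x)+b(x)\in\N$) and take $\mathbf I$ minus the result. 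On set (4), let $j_0(x)$ be the unique index with $f_{j_0}(x)>\tfrac12$; then $j_0$ is measurable, the transposition $(1\ \ j_0(x))$ is a measurable permutation, the permuted sequence has all entries beyond the first in $(0,\tfrac12)$ and sum $1+(a(x)-b(x))\in\N$, so Theorem \ref{mainspec2} applies, and Lemma \ref{permut} returns a projection with the original diagonal. Set (3) is the complement of set (4): pass to $g_i=1-f_i$, which is of type (4), and take $\mathbf I$ minus the result. Making these six sets pairwise disjoint in the obvious way and applying Lemma \ref{sklejanie} completes the proof.

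I expect the main obstacle to be the bookkeeping: verifying that the six configurations really exhaust $X$, and checking that the constraint $a(x)-b(x)\in\Z$ survives the complement $f\mapsto 1-f$ and the restriction to sets (3)--(4) -- so that the integer appearing on the right is the correct rank. The one conceptual ingredient is that Theorem \ref{mainspec2}, though it lives in the nonsummable section, also covers the branch $\sum_i f_i\in\N$; together with the automatic finiteness of $q(x)$ in the summable regime, this is precisely what lets the leftover ``all entries $\le\tfrac12$'' and ``one exceptional entry'' cases be reduced to it.
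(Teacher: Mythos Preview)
Your proposal is correct and follows essentially the same approach as the paper. The paper splits $X$ into $X'=\{\#\{f_i>\tfrac12\}=\infty\}$ and its complement, applies Theorem~\ref{mainfin2} on $X'$ when $N=\#\{f_i\le\tfrac12\}\ge 2$ and Theorem~\ref{mainspec2} to $(1-f_i)$ when $N\le 1$, then handles $X\setminus X'$ by the symmetry $f\mapsto 1-f$; you simply unroll that symmetry into an explicit six-case decomposition, using the same tools (Theorems~\ref{mainfin2} and~\ref{mainspec2}, the involution $f\mapsto 1-f$, measurable permutations, and Lemma~\ref{sklejanie}).
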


\begin{proof}
We split $X$ into two measurable subsets \[
X'=\{x \in X: \text{ there are infinitely many $f_i(x)\in (1/2,1)$}\}
\]
and its complement $X \setminus X'$. Furthermore, $X'$ is split into measurable sets $X_N$, $N\in \N \cup \{0,\infty\}$ such that there are exactly $N$ terms $f_i(x) \in (0,1/2]$ for $x\in X_N$. We can then apply Theorem \ref{mainfin2} on each set $X_N$, where $N\ge 2$, and use Lemma \ref{sklejanie}. Hence, it suffices to show that Theorem \ref{mainfin2} also holds for $N=0,1$. However, this is a consequence of Theorem \ref{mainspec2} applied to the sequence $(1-f_i)_{i\in \N}$, which has all terms in $(0,1/2)$ with the exception of at most one term in $[1/2,1)$. Hence, there exists a measurable projection $P: X_N \to \mathcal B(H)$, $N=0, 1$, with diagonal $(1-f_i)_{i\in \N}$. Consequently, $\mathbf I - P$ is a measurable projection with diagonal $(f_i)_{i\in I}$.

Since the series defining $a(x)$ is finite, for every $x\in X$, there are only finitely many $f_i(x)=1/2$. Thus, for every $x\in X \setminus X'$, there are infinitely many $f_i(x)\in (0,1/2)$. Applying the above construction for a sequence of functions $(1-f_i)_{i\in \N}$  on $X\setminus X'$ yields a measurable projection $P: X \setminus X' \to \mathcal B(H)$ with diagonal $(1-f_i)_{i\in \N}$. Consequently, $\mathbf I - P$ is a measurable projection with diagonal $(f_i)_{i\in I}$.
\end{proof}

Finally, we shall do away with the remaining superfluous assumption in Theorem \ref{mainfin3}. We adopt the following definition.

\defi{pro}{For a sequence of measurable functions $f_i:X\to [0,1]$, $i\in \N$, we
define functions $Pro(n):X\to\nat$ and $pro(n):X\to\nat$ as follows. Let $Pro(n)(x)=k$ if
$f_k(x)$ is the $n$-th number in the sequence $(f_i(x))$ that belongs to $(0,1)$. Likewise, we let $pro(n)(x)=k$ if $f_k(x)$ is the $n$-th number in the sequence $(f_i(x))$ that is either $0$ or $1$.}

We have the following analogue of Lemma \ref{position}, which is shown in the same manner.

\lem{proper}{Functions $Pro(n)$ and $pro(n)$ are measurable for all $n$.}

\begin{proof}[Proof of Theorem \ref{mainfin}]

We split $X$ into measurable subsets corresponding to the following two cases. 

{\bf Case 1. }There are only finitely many $f_i\in (0,1)$. \newline 
For any finite sequence $k_1, \ldots,k_n$ of natural numbers the set 
$$X_{k_1,\ldots,k_n}=\bigcap_{j=1,\ldots,n} \{f_{k_j}\in (0,1)\}~\cap ~\bigcap_{i\ne k_j} (\{f_i=0\}\cup\{f_i=1\})$$ 
is a measurable subset of $X$. Let $(e_i)_{i\in \N}$ be an orthonormal basis of H. We  split the space $H$ into two orthogonal subspaces
\[
H_{k_1,\ldots,k_n}=\spa\{e_{k_i}: i=1,\ldots,n\}
\qquad
(H_{k_1,\ldots,k_n})^\bot =\ov{\spa} \{ e_j: j \ne k_i \}.
\] 
By Theorem \ref{skonczone} there is a measurable projection $P_{k_1, \ldots,k_n}: X_{k_1, \ldots, k_n} \to \mathcal B(H_{k_1,\ldots,k_n})$ with diagonal $(f_{k_i})_{i\in [n]}$. On the space $(H_{k_1,\ldots,k_n})^\bot$ 
there is an obvious diagonal projection $Q_{k_1,\ldots,k_n}$ with zeros and ones on the diagonal. 
The projection $P_{k_1,\ldots,k_n}\oplus Q_{k_1,\ldots,k_n}$ is a sought projection on $X_{k_1,..,k_n}$ acting on the whole space $H$. 
Since there are countably many sets $X_{k_1,\ldots,k_n}$, and they are disjoint for different $k_1,\ldots,k_n$, we 
are done on a measurable set 
\[
\bigcup_{n=0}^\infty \bigcup_{k_1,\ldots,k_n \in \N} X_{k_1,\ldots,k_n}.
\]
The case $n=0$ corresponds to all zeros and ones in $(f_i)$.

{\bf Case 2.} There are infinitely many $f_i\in (0,1)$. \newline
We split $X$ into measurable subsets
\[
X_N = \{ x\in X: \text{there are exactly $N$ values $f_i(x)=0$ or $1$}\}, \qquad N \in \N \cup \{0,\infty\}.
\]
That way functions $pro(n)$ are defined on $X_N$ for all $1\le n\le N$. For simplicity we shall assume that $N=\infty$. The case $N<\infty$ is a simple modification and it does not cause any difficulty.

Define a measurable projection $\pi: X_\infty \times \N \to \N$ by
\[
\pi(n)= \begin{cases}
Pro(n/2) & n \text{ is even},
\\
pro((n+1)/2) & n \text{ is odd}.
\end{cases}
\]
Define orthogonal subspaces
\[
H_0 = \ov{\spa}\{e_{2n}: n\in \N\},
\qquad
H_1= \ov{\spa}\{e_{2n-1}: n\in \N \}.
\]
By Thoerem \ref{mainfin3}, there exists a measurable projection $P_0: X_\infty \to \mathcal B(H_0)$ with diagonal $(f_{\pi(2n)})_{n\in\N}$. Let $P_1: X_\infty \to \mathcal B(H_1)$ be the obvious diagonal projection with zeros and ones on the diagonal $(f_{\pi(2n-1)})_{n\in\N}$. Then, $P_0 \oplus P_1$ is a measurable projection with diagonal $(f_{\pi(n)})_{n\in\N}$. Applying Lemma \ref{permut} yields the desired measurable projection.
\end{proof}

\section{Applications}\label{S4}

In this section we present applications of Theorem \ref{sane} to shift-invariant spaces and von Neumann algebras.

\subsection{Shift-invariant spaces}
Shift-invariant (SI) spaces are closed subspaces of $L^2(\R^d)$ that are
invariant under all shifts, i.e., integer translations. That is, a closed subspace $V \subset  L^2(\R^d)$ is SI if $T_k(V)=V$ for all $k\in\Z^d$,
where $T_kf(x)=f(x-k)$ is the translation operator.
The theory of
shift-invariant spaces plays an important role in many
areas, most notably in the theory of wavelets,  spline systems, Gabor systems,
and approximation theory \cite{BDR1, BDR2, b, RS1, RS2}.
The study of analogous spaces for $L^2(\mathbb T, H)$ with values in a
separable Hilbert space $H$ in terms of the range function, often called
doubly-invariant spaces, is quite classical and goes back to Helson \cite{h}.

In the context of SI spaces a {\it range function} is any mapping
\begin{equation}\label{range}
J: \mathbb T^d \to \{Y \subset \ell^2(\Z^d): Y \text{ is a closed subspace}\},
\end{equation}
where $\mathbb T^d=\R^d/\Z^d$ is
identified with its fundamental domain $[-1/2,1/2)^d$. We say that $J$ is {\it
measurable} if the associated orthogonal projections $P_J(\xi)$ of $\ell^2(\Z^d)$ onto $J(\xi)$ are weakly operator measurable in the sense of Definition \ref{mp}. We follow the convention which identifies range functions if they are equal a.e. 
A fundamental result due to Helson \cite[Theorem 8, p.~59]{h} gives one-to-one correspondence between SI spaces $V$ and measurable range functions $J$, see also \cite[Proposition 1.5]{b}. This is achieved using a fiberization operator $\mathcal T : L^2(\R^d) \to L^2(\mathbb T^d, \ell^2(\mathbb Z^d))$ given by
\[
\mathcal T f(\xi) = (\hat f(\xi+k))_{k\in \Z^d} \qquad\text{for } f\in L^2(\R^d),\  \xi \in \T^d,
\]
where $\hat f(\xi)= \int_{\R^d} f(x) e^{-2\pi i\langle x, \xi \rangle} dx$ is the Fourier transform of $f\in L^1(\R^d)$, and extended unitarily to $L^2(\R^d)$ by the Plancherel theorem. Then, the one-to-one correspondence between SI spaces $V \subset L^2(\R^n)$ and measurable range functions $J$ is encapsulated by the formula
\[
V = \{ f\in L^2(\R^d): \mathcal T f(\xi) \in J(\xi) \quad\text{for a.e. }\xi\in \R^d\}.
\]

Spectral function of SI spaces were introduced by Rzeszotnik and the first author in \cite{br, br2}, see also \cite{dut, gh}. While there are several equivalent ways of introducing the spectral function of a SI space, the most relevant definition uses a range function.

\begin{definition}\label{spec}
The {\it spectral function} of a SI space $V$ is a measurable mapping 
$\sigma_V: \R^d \to [0,1]$ given by
\begin{equation}\label{dsp}
\sigma_V(\xi+k) = ||P_J(\xi) e_k||^2=\langle P_J(\xi)e_k,e_k \rangle \qquad\text{for } \xi\in \mathbb T^d,\ k\in\Z^d,
\end{equation}
where $(e_k)_{k\in\Z^d}$ denotes the standard basis of $\ell^2(\Z^d)$ and $\mathbb T^d=[-1/2,1/2)^d$. In other words, $(\sigma_V(\xi+k))_{k\in \Z^d}$ is a diagonal of a projection $P_J(\xi)$.
\end{definition}

Note that $\sigma_V(\xi)$ is well defined for a.e.~$\xi\in\R^d$,
since $\{ k+ \mathbb T^d: k\in \Z^d\}$
is a partition of $\R^d$. Theorem \ref{sane} yields the following characterization of spectral functions.

\begin{theorem}\label{sp}
Let $\sigma: \R^d \to [0,1]$ be a measurable function. For $\xi\in \R^d$ define
\[
a(\xi)=\sum_{k\in \Z^d, \ \sigma(\xi+k)<1/2} \sigma(\xi+k) \quad\text{and}\quad 
b(\xi)=\sum_{k\in \Z^d, \ \sigma(\xi+k) \ge1/2}(1-\sigma(\xi+k)).
\]
The following are equivalent:
\begin{enumerate}[(i)]
\item
There exists a SI space $V \subset L^2(\R^d)$ such that its spectral function $\sigma_V$ coincides almost everywhere with $\sigma$,
\[
\sigma_V(\xi) = \sigma(\xi) \qquad\text{for a.e. }\xi \in \R^d.
\]
\item
For a.e. $\xi \in \R^d$ we either have
\begin{itemize}
\item $a(\xi),b(\xi)<\infty$ and  $a(\xi)-b(\xi)\in\Z$, or
\item $a(\xi)=\infty$ or $b(\xi)=\infty$.
\end{itemize}
\end{enumerate}
\end{theorem}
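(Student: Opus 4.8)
\textbf{Proof proposal for Theorem \ref{sp}.}
The plan is to transfer Theorem \ref{sane} through Helson's correspondence between SI spaces and measurable range functions. The key point is that the spectral function is, fiberwise, nothing but the diagonal of the projection $P_J(\xi)$ onto $J(\xi)$, after reindexing the standard basis of $\ell^2(\Z^d)$ by $\Z^d$ rather than $\N$. So I would begin by fixing a bijection $\Z^d \to \N$ and, with $X = \T^d = [-1/2,1/2)^d$ equipped with its Lebesgue $\sigma$-algebra, setting $f_k(\xi) = \sigma(\xi+k)$ for $k\in\Z^d$. Each $f_k$ is measurable on $\T^d$ since $\sigma$ is measurable on $\R^d$ and $\xi\mapsto\xi+k$ is a measurable bijection of $\T^d$ onto $k+\T^d$; moreover the quantities $a(\xi), b(\xi)$ in the statement of Theorem \ref{sp} agree (for $\xi\in\T^d$) with the functions $a,b$ of \eqref{sane0} built from $(f_k)$, up to the harmless convention of whether $\sigma(\xi+k)=1/2$ is grouped with $a$ or with $b$ — since $x=1-x$ there, the two sums are literally the same. (I would remark on this so the reader sees the strict-inequality discrepancy between \eqref{dsp}'s setup and \eqref{sane0} is immaterial.)

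Next, the direction (ii) $\Rightarrow$ (i). Assume condition (ii) holds for a.e.\ $\xi$; after discarding a null set we may assume it holds for every $\xi\in\T^d$, so the hypotheses (i)/(ii) of Theorem \ref{sane} are met by the family $(f_k)_{k\in\Z^d}$ on $X=\T^d$. Theorem \ref{sane} then produces a measurable projection $P:\T^d\to\mathcal B(\ell^2(\Z^d))$ with diagonal $(f_k)_{k\in\Z^d}$. Define a range function $J$ by letting $J(\xi)$ be the range of $P(\xi)$; then $P_J(\xi)=P(\xi)$ is weakly operator measurable, so $J$ is a measurable range function in the sense recalled before Definition \ref{spec}. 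By Helson's theorem there is a (unique) SI space $V\subset L^2(\R^d)$ with range function $J$, and by Definition \ref{spec} its spectral function satisfies $\sigma_V(\xi+k)=\langle P_J(\xi)e_k,e_k\rangle = f_k(\xi) = \sigma(\xi+k)$ for all $\xi\in\T^d$, $k\in\Z^d$; since $\{k+\T^d:k\in\Z^d\}$ partitions $\R^d$ this gives $\sigma_V=\sigma$ a.e.

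For (i) $\Rightarrow$ (ii), run the same dictionary backwards. Given a SI space $V$ with $\sigma_V=\sigma$ a.e., Helson's theorem supplies a measurable range function $J$ with associated measurable projections $P_J(\xi)$, and for a.e.\ $\xi\in\T^d$ the sequence $(\sigma(\xi+k))_{k\in\Z^d} = (\langle P_J(\xi)e_k,e_k\rangle)_{k\in\Z^d}$ is the diagonal of an \emph{honest} orthogonal projection on $\ell^2(\Z^d)$; Kadison's Theorem \ref{Kadison} (the Pythagorean, i.e.\ necessity, half) then forces exactly the dichotomy in (ii) for that $\xi$. No measurability is needed here — it is a pointwise application of Kadison's necessity direction fiber by fiber. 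I expect the only real subtlety, and the one worth a sentence or two of care, is the bookkeeping at the threshold value $1/2$ (reconciling the strict vs.\ non-strict conventions between \eqref{dsp}, \eqref{sane0}, and the statement of Theorem \ref{sp}) together with the routine but necessary check that the identification of $\ell^2(\Z^d)$ with $H=\ell^2(\N)$ via a fixed bijection $\Z^d\to\N$ carries measurable projections to measurable projections and matches the two notions of diagonal; both are straightforward, so this theorem is essentially a corollary of Theorem \ref{sane} and Helson's correspondence.
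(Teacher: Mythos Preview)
Your proposal is correct and follows essentially the same route as the paper: for (i)$\Rightarrow$(ii) apply the necessity (Pythagorean) half of Kadison's Theorem fiberwise to the diagonal of $P_J(\xi)$, and for (ii)$\Rightarrow$(i) invoke Theorem \ref{sane} on the full-measure set where (ii) holds to obtain a measurable projection, then pass to a range function and use Helson's correspondence to produce the SI space $V$. Your additional remarks on the $\Z^d\leftrightarrow\N$ bijection and the harmless strict/non-strict discrepancy at the threshold $1/2$ are points the paper leaves implicit, so your write-up is, if anything, slightly more careful.
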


\begin{proof}
The implication $(i)\implies (ii)$ follows from the necessity part of Kadison Theorem \ref{Kadison}. Indeed, by Definition \ref{spec} the sequence $(\sigma_V(\xi+k))_{k\in\Z^d}$ is a diagonal of a projection $P_J(\xi)$ acting on $\ell^2(\Z^d)$ for a.e. $\xi \in\T^d$. Hence, (ii)  holds pointwise a.e.

The converse implication $(ii) \implies (i)$ is the crux of this result.  Kadison's Theorem \ref{Kadison} implies merely the existence of projections $P(\xi)$ with desired diagonal $(\sigma(\xi+k))_{k\in\Z^d}$ for a.e. $\xi$. However, it does not guarantee the measurability of $\xi \mapsto P(\xi)$. This is where we need to apply Theorem \ref{sane} instead. Let $X$ be the set of all points $\xi\in \R^d$ such that (ii) holds. This is a set of full measure, i.e., $\R^d \setminus X$ is a null set. Theorem \ref{sane} yields the existence of a measurable projection $P: X \to \mathcal B(\ell^2(\Z^d))$ such that $P(\xi)$ has diagonal $(\sigma(\xi+k))_{k\in\Z^d}$ for a.e. $\xi$. This projection correspond to a measurable range function $J$ as in \eqref{range}, which by Helson's theorem corresponds to a SI space $V \subset L^2(\R^d)$.
\end{proof}

The notion of a spectral function can be extended to the setting of translation invariant (TI) subspaces of $L^2(G)$ of a second countable locally countable group $G$, see \cite{br3}. In this case, $\Gamma \subset G$ is a closed co-compact subgroup and the spectral function represents diagonal entries of a measurable projection defined on $X=\hat G/\Gamma^*$ with values in  $\mathcal B(\ell^2(\Gamma^*))$, where $\Gamma^*$ is the annihilator of $\Gamma$ in the dual group $\hat G$. Since Theorem \ref{sane} does not put any restriction on a measurable space $X$, an extension of Theorem \ref{sp} to this setting does not cause any extra difficulties.

\subsection{Carpenter's Theorem for type I$_\infty$ von Neumann algebras}
In this subsection we will answer Problem \ref{psh} for von Neumann algebras of type I$_\infty$ when $T$ is a projection. 

The structure theorem for type I von Neumann algebras \cite[Section I.3.12]{black} gives the following characterization. A type I$_n$, where $n\in \N \cup \{\infty\}$, von Neumann algebra $\mathfrak M$ is isomorphic with the tensor product $\mathfrak M \cong \mathcal B(H_n) \otimes L^\infty(X,\mu)$, where $H_n$ is $n$ dimensional Hilbert space and $(X,\mu)$ is locally finite measure space. Hence, $\mathfrak M$ is isomorphic to $L^\infty(X,\mu,\mathcal B(H_n))$ acting on a Hilbert space $L^2(X,\mu,H_n)$. 

More precisely, let $H$ be the direct integral of a measurable field of infinite dimensional separable Hilbert spaces $H_\infty$ on $(X,\mu)$ given by
\begin{equation}\label{di1}
H = \int^\oplus_X H_\infty d\mu(x).
\end{equation}
Let $\mathfrak M$ be the subalgebra of $\mathcal B(H)$ consisting of all decomposable operators
\begin{equation}\label{di2}
T = \int^\oplus_X T(x) d\mu(x),
\end{equation}
where $x\mapsto T(x) \in \mathcal B(H_\infty)$ is an essentially bounded measurable field of operators, see \cite[Section 2.3]{Dix}. The structure theorem for type I$_\infty$ von Neumann algebra implies that every such algebra is of this form. Consider MASA $\mathcal A \subset \mathcal M$ consisting of all decomposable operators $T$ such that for $\mu$-a.e. $x\in X$, $T(x)$ is a diagonal operator with respect to some fixed orthonormal basis $(e_i)_{i=1}^\infty$ of $H_\infty$. Then, Theorem \ref{sane} implies the following theorem.

\begin{theorem}\label{vn}
Let $\mathfrak M$ be the von Neumann algebra of type {\rm I}$_\infty$ consisting of decomposable operators \eqref{di2} acting on the direct integral Hilbert space \eqref{di1}. Let $\mathcal A\subset \mathfrak M$ be a MASA consisting of decomposable operators whose fibers $T(x)$ are diagonal operators with respect to some fixed orthonormal basis of $H_\infty$.

Suppose that $P$ is a projection in $\mathfrak M$. Define the dimension functions $p,q: X \to \N \cup \{0,\infty\}$ by
\[
p(x) = \operatorname{rank} (P(x)), 
\qquad
q(x) = \operatorname{rank} (\mathbf I - P(x)) \qquad\text{for }x\in X.
\]
Then, the set $D_{\mathcal A}(P)$ of conditional expectations of the unitary orbit of $P$, which is given by \eqref{di0}, consists of operators $T$ such that $T(x)$ is a diagonal operator on $\mathcal H_\infty$ with entries given by a sequence of measurable functions $f_i: X \to [0,1]$, $i\in \N$, satisfying the following conditions for $\mu$-a.e. $x\in X$:
\begin{enumerate}[(i)]
\item
$\sum_{i\in \N} f_i(x)= p(x)$, $\sum_{i\in \N} (1-f_i(x))=q(x)$, and
\item functions $a,b:X \to [0,\infty]$ given by \eqref{sane0} satisfy either: 
\begin{itemize}
\item
$a(x)=\infty$ or $b(x)=\infty$, or
\item
$a(x), b(x)<\infty$ and $a(x)-b(x)\in\Z$.
\end{itemize}
\end{enumerate}
\end{theorem}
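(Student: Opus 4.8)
The plan is to prove Theorem \ref{vn} by reducing it to the pointwise Kadison Theorem \ref{Kadison} for the necessity direction and to the measurable Carpenter's Theorem \ref{sane} for the sufficiency direction. First I would set up notation: identifying $\mathfrak M$ with $L^\infty(X,\mu,\mathcal B(H_\infty))$ and $\mathcal A$ with $L^\infty(X,\mu,\ell^\infty)$, a unitary $U\in\mathfrak M$ is a measurable field $x\mapsto U(x)$ of unitaries on $H_\infty$, and the conditional expectation $E_{\mathcal A}$ acts fiberwise by taking the diagonal of each $U(x)^*P(x)U(x)$ with respect to the fixed basis $(e_i)$. Thus for $T\in D_{\mathcal A}(P)$ there is a measurable unitary field $U$ with $T(x) = E(U(x)^*P(x)U(x))$ for a.e. $x$, so $f_i(x) = \langle U(x)^*P(x)U(x)e_i, e_i\rangle$ defines the candidate measurable functions, and conversely any measurable projection whose fibers are unitarily conjugate to $P(x)$ with the prescribed diagonal gives an element of $D_{\mathcal A}(P)$.

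For the necessity direction ($T\in D_{\mathcal A}(P)$ implies (i) and (ii)): conjugation by a unitary preserves the rank, so $\operatorname{rank}(U(x)^*P(x)U(x)) = \operatorname{rank}(P(x)) = p(x)$ and similarly for the complement, which forces $\sum_i f_i(x) = p(x)$ and $\sum_i (1-f_i(x)) = q(x)$ since the trace of a projection equals its rank. Condition (ii) is then immediate from the necessity part (the Pythagorean Theorem) of Kadison's Theorem \ref{Kadison} applied fiberwise, because for a.e. $x$ the sequence $(f_i(x))_{i\in\N}$ is literally the diagonal of an orthogonal projection $U(x)^*P(x)U(x)$ on the separable infinite-dimensional Hilbert space $H_\infty$. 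Measurability of $f_i$ and of $p,q$ is routine from the hypotheses.

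For the sufficiency direction, suppose $f_i$, $i\in\N$, are measurable, satisfy (i) and (ii) a.e., and $P$ is a given projection in $\mathfrak M$ with rank functions $p,q$. The first subtlety is that (i) encodes exactly the constraint $\sum_i f_i(x)=p(x)$, which via the elementary identity $\sum_i f_i = a + (\#\{i: f_i>1/2\}) - b$ (when the relevant pieces are finite) ties $p(x)$ to $a(x)-b(x)$; but the cleaner route is simply to invoke Theorem \ref{sane} directly on the measurable functions $(f_i)$ to produce a measurable projection $Q: X\to\mathcal B(H_\infty)$ with $\langle Q(x)e_i,e_i\rangle = f_i(x)$ for a.e. $x$. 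One then checks that (i) forces $\operatorname{rank}(Q(x)) = \sum_i\langle Q(x)e_i,e_i\rangle = p(x)$ and $\operatorname{rank}(\mathbf I - Q(x)) = q(x)$ for a.e. $x$. Hence for a.e. $x$, $Q(x)$ and $P(x)$ are two projections on $H_\infty$ with equal rank and equal co-rank, so there is a unitary $W(x)$ on $H_\infty$ with $W(x)^*P(x)W(x) = Q(x)$.

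The main obstacle — and the step I would spend the most care on — is the measurable selection of the conjugating unitaries $x\mapsto W(x)$: one needs that the pointwise unitary equivalence of $P(x)$ and $Q(x)$ can be realized by a measurable field $W$, so that $U = W$ becomes a genuine unitary in $\mathfrak M$. I would handle this by a measurable selection / partitioning argument: split $X$ into the countably many measurable pieces $X_{m} = \{x: p(x) = m\}$ for $m\in\N\cup\{0,\infty\}$ (each measurable since $p$ is measurable), and on each piece construct $W$ by sending a measurable choice of orthonormal basis of the range of $P(x)$ to a measurable choice of orthonormal basis of the range of $Q(x)$, and likewise on the orthogonal complements; measurability of such bases follows from Gram--Schmidt applied to the measurable fields $x\mapsto P(x)e_j$ and $x\mapsto Q(x)e_j$ after discarding, in a measurable way, the indices producing linear dependence, exactly in the spirit of Lemmas \ref{sklejanie}, \ref{index}, and the arguments in Section \ref{S2}. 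Gluing these pieces via Lemma \ref{sklejanie} yields a measurable unitary field $W$ on all of $X$, and then $T := E_{\mathcal A}(W^*PW)$ lies in $D_{\mathcal A}(P)$ and has fibers diagonal with entries $(f_i(x))$, completing the proof.
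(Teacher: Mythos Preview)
Your proposal is correct and follows essentially the same route as the paper: necessity via the fiberwise trace argument and Kadison's Pythagorean direction, sufficiency via Theorem~\ref{sane} to produce a measurable projection $Q$ with the prescribed diagonal, and then a measurable field of unitaries intertwining $P$ and $Q$ built from measurable orthonormal bases of the ranges and their complements. The only cosmetic difference is that the paper invokes Helson's theorem \cite[Theorem~2 in Section~1.3]{h2} to obtain the measurable orthonormal bases of $J_P(x)$, $J_Q(x)$ and their orthogonal complements, whereas you outline the same construction by hand via a measurable Gram--Schmidt on $(P(x)e_j)_j$ and $(Q(x)e_j)_j$ together with the partition $X=\bigcup_m X_m$; these are two packagings of the same argument.
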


\begin{proof}
Let $U \in \mathfrak M$ be a unitary operator. Since $U$ is a decomposable operator its fibers $U(x)$ are unitary operators for  a.e. $x$. Thus, $U^*PU$ is a decomposable operator with fibers $U(x)^*P(x)U(x)$. Let $(f_i(x))_{i=1}^\infty$ be the diagonal of $U(x)^*P(x)U(x)$. Then, by the trace argument and by the necessity part in Kadison's Theorem \ref{Kadison}, the diagonal sequence satisfies (i) and (ii), respectively.

The converse implication is a consequence of Theorem \ref{sane}. Let $X' \subset X$ be the subset of full measure for which either (i) or (ii) holds for all $x \in X'$. By Theorem \ref{sane}, there exists a measurable projection $Q: X' \to \mathcal B(H_\infty)$ with diagonal $(f_i)_{i\in \N}$. We extend $Q$ to $X$ in any way. Then,
\begin{equation*}\label{di4}
Q = \int^\oplus_X Q(x) d\mu(x)
\end{equation*}
is a projection in $\mathfrak M$ with diagonal $(f_i)_{i\in\N}$ modulo null sets. It remains to show that there exists a unitary $U \in \mathfrak M$ such that $Q=U^*PU$.

Measurable projections $P$ and $Q$ correspond to measurable range functions 
\[
J_P, J_Q: X \to \{ Y \subset H_\infty: Y \text{ is a closed subspace}\}.
\]
Let $P^\perp=\mathbf I -P$ and $Q^\perp = \mathbf I - Q$ be the projections on orthogonal subspaces, which correspond to measurable range functions 
\[
J_{P^\perp}(x) = (J_P(x))^\perp,
\qquad
J_{Q^\perp}(x) = (J_Q(x))^\perp.
\]
By (i) we have 
\[
\dim J_P(x)=\dim J_Q(x)=p(x)
\quad\text{and}\quad
\dim J_{P^\perp}(x)=\dim J_{Q^\perp}(x)=q(x)
\qquad\text{for $\mu$-a.e. }x.
\]
By Helson's theorem \cite[Theorem 2 in Section 1.3]{h2}, there exists a sequence of measurable functions $G_i: X \to H_\infty$, $i\in\N$, such that for $\mu$-a.e. $x\in X$, $\{G_i(x)\}_{i=1}^{p(x)}$ forms an orthonormal basis of $J_P(x)$. Let $\tilde G_i$, $i\in\N$, be the corresponding orthonormal basis sequence for the range function $J_Q$. Likewise, there exists measurable functions $F_i: X \to H$ and $\tilde F_i: X \to H_\infty$, $i\in\N$, such that $\{F_i(x)\}_{i=1}^{q(x)}$ and $\{\tilde F_i(x)\}_{i=1}^{q(x)}$ are an orthonormal basis of $J_{P^\perp}(x)$ and $J_{Q^\perp}(x)$, resp. Let $U(x)$ be the unitary operator on $H_\infty$ which maps the orthonormal basis $\{\tilde G_i(x)\}_{i=1}^{p(x)} \cup \{\tilde F_i(x)\}_{i=1}^{q(x)}$ onto $\{G_i(x)\}_{i=1}^{p(x)} \cup \{F_i(x)\}_{i=1}^{q(x)}$. Then, our construction yields $Q(x) = U(x)^* P(x) U(x)$. Consequently,
\begin{equation*}%\label{di4}
U = \int^\oplus_X U(x) d\mu(x)
\end{equation*}
is the required unitary satisfying $Q=U^*PU$, which completes the proof.
\end{proof}

We conjecture that an analogue of Theorem \ref{vn} holds for self-adjoint operators $T\in \mathfrak M$ such that $T(x)$ has a finite spectrum for $\mu$-a.e. $x\in X$. The necessary conditions are provided by the corresponding result for I$_{\infty}$ factors, that is $\mathcal B(H_\infty)$, which was shown by Jasper and the first author \cite[Theorem 1.3]{bj2}. However, the sufficiency requires a construction of a measurable field of unitary operators and, a priori, it is not clear if this is possible. The lack of any obstruction for operators with two point spectrum, which are essentially projections, suggests an affirmative answer to this problem as well.

\end{document}